\documentclass[reqno]{amsart}

\usepackage[margin=1in]{geometry}

\usepackage{amssymb}
\usepackage{amsmath}
\usepackage{amsthm}

\usepackage{graphicx}

\usepackage{microtype}

 \usepackage{enumerate}
\usepackage{tikz}
\usepackage{subcaption}

\usepackage[font=normalsize,labelfont=bf]{caption}

\usetikzlibrary{matrix,arrows.meta}
\usetikzlibrary{positioning,decorations.pathreplacing,decorations.markings}

\usepackage{tikz-cd}

\DeclareMathOperator{\ReP}{Re}
\DeclareMathOperator{\diag}{diag}

\newcommand{\R}{\mathbb{R}}
\newcommand{\Z}{\mathbb{Z}}

\newtheorem{definition}{Definition}[section]
\newtheorem{theorem}[definition]{Theorem}
\newtheorem{lemma}[definition]{Lemma}
\newtheorem{proposition}[definition]{Proposition}
\newtheorem{remark}[definition]{Remark}
\newtheorem{example}[definition]{Example}
\newtheorem{corollary}[definition]{Corollary}

\begin{document}

\title[Linear recurrences for non-LC independence polynomials of trees]{Linear recurrences for non-log-concave independence polynomials of trees}

\author[C. Bautista-Ramos
  \and
        C. Guill\'en-Galv\'an \and P. G\'omez-Salgado]{C\'esar Bautista-Ramos
  \and
        Carlos Guill\'en-Galv\'an \and Paulino G\'omez-Salgado 
}

\address{C. Bautista-Ramos, Facultad de Ciencias de la Computaci\'on, Benem\'erita Universidad Aut\'onoma de Puebla, 1CCO3-303, 14 sur y Av. San Claudio, Ciudad Universitaria 72570, Puebla, Pue. Mexico}
\email{cesar.bautista@correo.buap.mx}           
\address{          
 C. Guill\'en-Galv\'an  \and P. G\'omez-Salgado, Facultad de Ciencias F\'{\i}sico-Matem\'aticas, Benem\'erita Universidad Aut\'onoma de Puebla, FM7-207, 18 Sur y Av. San Claudio, Ciudad Universitaria 72570, Puebla, Pue. Mexico
}
 \email{cguillen@fcfm.buap.mx, paulino.gomezs@correo.buap.mx}

\begin{abstract}
We identify a structural pattern in the construction of known infinite families of trees whose independence polynomials are not log-concave. Using this pattern and properties of polynomial ring ideals, we derive linear recurrences for these polynomials. As a consequence, we prove that the set of non-isolated limit points of their zeros lies on the circle $|z+1/3|=1/3$ in the complex plane. Building on these recurrences, we also exhibit infinite families of trees whose independence polynomials break log-concavity at one, two, and three consecutive indices, as well as finite families that break log-concavity at four and five consecutive indices. Our approach suggests that arbitrarily many consecutive breaks may be achievable, offering further insight into a question posed by Galvin [D. Galvin, \emph{Trees with non log-concave independent set sequences}, arXiv:2502.10654v1, 2025].
\end{abstract}

\maketitle



\section{Introduction} The conjecture that independence polynomials of trees are unimodal \cite{Alavi} was strengthened to log-concavity \cite{Levit}, but subsequently disproven by the discovery of infinite families of trees failing this property at a single index \cite{Kadrawi,Kadrawi2,Galvin}. Addressing the question posed by Galvin \cite{Galvin} regarding multiple violations, recent work \cite{yo} exhibited trees with multiple \emph{non-consecutive} breaks. In this paper, we present the first families of trees exhibiting \emph{consecutive} violations. Specifically, we construct infinite families breaking log-concavity at two and three consecutive indices, and finite families breaking it at four and five. Our methods suggest that arbitrarily many consecutive violations are possible.

We achieve this by identifying a structural pattern common to prior counterexamples (Definition \ref{def:patt}). By deriving linear recurrences for these pattern graphs (Theorems \ref{thm:0} and \ref{thm:recM}), we systematically generate trees with the desired properties, distinguishing our work from sporadic counterexamples \cite{Ramos}.

The paper is organized as follows. Section \ref{sec:Br1} formalizes the unifying pattern and establishes preliminary recurrences. Section \ref{s:recPatterns} derives sharper recurrences using properties of polynomial ideals. Finally, Section \ref{s:app} details the applications, including alternative proofs of the non-log-concavity of the trees in \cite{Kadrawi,Kadrawi2}, the location of non-isolated limit points of the zeros, and the construction of trees with multiple consecutive log-concavity violations.

\section{Patterns and their linear recurrences}\label{sec:Br1}

We adopt standard definitions and notation concerning independent sets of simple graphs and the log-concavity of sequences (see, for instance, \cite{Galvin,Kadrawi}). For a graph $G$ and a vertex $v$ of $G$, let $N[v]$ denote the closed neighborhood of $v$, and let $G\setminus v$ and $G\setminus N[v]$ denote the graphs obtained by deleting $v$ and $N[v]$, respectively. We write $G^v$ for the graph $G$ rooted at vertex $v$. When the root is clear or irrelevant, we may drop the superscript and simply write $G$. Finally, we denote the independence polynomial of $G$ in the indeterminate $x$ by $I(G;x)$. Its degree equals the independence number of $G$, denoted by $\alpha(G)$.

\subsection{Pattern graphs}

Kadrawi et al.~\cite{Kadrawi} identified two infinite families of trees (with structures $3,k,k$ and $3^*,k,k+1$ for $k\geq 4$) whose independence polynomials are not log-concave. These families are shown in Figures~\ref{fa:3kk} and~\ref{fb:3kk+1}, respectively. Additional infinite families with non-log-concave independence polynomials appear in~\cite{yo,Galvin,Kadrawi2}. A common structural pattern underlying the construction of these graphs is captured by the following definition.

\begin{figure}[th]

\centerline{
\subfloat[Structure 3,$k$,$k$] {
  \resizebox{5cm}{!}{
    \begin{tikzpicture}[underbrace style/.style={
            decorate,
            decoration={
                brace,
                amplitude=5pt, 
                raise=5pt,    
                mirror        
            },
            thick,
            color=black
        }]
    \node[circle, draw] (a1) at (5,0) {};
     \node[circle, draw] (a2) at (5,1) {};
  \node[circle, draw] (b1) at (6,0) {};
  \node[circle, draw] (b2) at (6,1) {};
 \node[circle, draw] (c1) at (7,0) {};
 \node[circle, draw] (c2) at (7,1) {};
 \node[circle, draw] (i1) at (6,2) {};

  \node[circle, draw] (A1) at (8,0) {};
     \node[circle, draw] (A2) at (8,1) {};
  \node[circle, draw] (B1) at (9,0) {};
  \node[circle, draw] (B2) at (9,1) {};
 \node[circle, draw] (C1) at (11,0) {};
 \node[circle, draw] (C2) at (11,1) {};
 \node[circle, draw] (I1) at (9,2) {};

  \node[circle, draw] (D1) at (12,0) {};
     \node[circle, draw] (D2) at (12,1) {};
  \node[circle, draw] (E1) at (13,0) {};
  \node[circle, draw] (E2) at (13,1) {};
 \node[circle, draw] (F1) at (15,0) {};
 \node[circle, draw] (F2) at (15,1) {};
 \node[circle, draw] (G1) at (13,2) {};

 \node[circle, draw] (P) at (9,4) {};

 \node (pts2) at (14,1) {$\cdots$};
 \node (pts1) at (10,1) {$\cdots$};
 \node (txt1) at (9,-1) {\hspace{.9cm}{\LARGE $k$}};
 \node (txt2) at (13,-1) {\hspace{.9cm}{\LARGE $k$}};

\draw (i1)  -- (a2);
\draw (i1)  -- (b2);
\draw (i1)  -- (c2); 
  \draw (a1)  -- (a2);
  \draw (b1) -- (b2);
  \draw (c1)  -- (c2);

  \draw (I1)  -- (A2);
 \draw (I1)  -- (B2);
\draw (I1)  -- (C2); 
  \draw (A1)  -- (A2);
  \draw (B1) -- (B2);
  \draw (C1)  -- (C2);

  \draw (G1)  -- (D2);
 \draw (G1)  -- (E2);
\draw (G1)  -- (F2); 
  \draw (D1)  -- (D2);
  \draw (E1) -- (E2);
  \draw (F1)  -- (F2);

  \draw (P)  -- (i1);
  \draw (P)  -- (I1);
  \draw (P)  -- (G1);

  \draw[underbrace style] (A1.south west) -- (C1.south east);
  \draw[underbrace style] (D1.south west) -- (F1.south east);
  
\end{tikzpicture}
}\label{fa:3kk}
}
\qquad
\subfloat[Structure $3*,k,k+1$] {
\resizebox{5cm}{!}{
    \begin{tikzpicture}[fullcircle/.style={circle, draw=#1, fill=#1},
    fullcircle/.default=black,underbrace style/.style={
            decorate,
            decoration={
                brace,
                amplitude=5pt, 
                raise=5pt,    
                mirror        
            },
            thick,
            color=black
        }]
    \node[circle, draw] (a1) at (5,0) {};
     \node[circle, draw] (a2) at (5,1) {};
  \node[circle, draw] (b1) at (6,0) {};
  \node[circle, draw] (b2) at (6,1) {};
 \node[circle, draw] (c1) at (7,0) {};
 \node[circle, draw] (c2) at (7,1) {};
 \node[circle, draw] (i1) at (6,2) {};
 \node[circle, draw] (j1) at (5,-1) {};
 \node[circle, draw] (k1) at (5,-2) {};

  \node[circle, draw] (A1) at (8,0) {};
     \node[circle, draw] (A2) at (8,1) {};
  \node[circle, draw] (B1) at (9,0) {};
  \node[circle, draw] (B2) at (9,1) {};
 \node[circle, draw] (C1) at (11,0) {};
 \node[circle, draw] (C2) at (11,1) {};
 \node[circle, draw] (I1) at (9,2) {};

  \node[circle, draw] (D1) at (12,0) {};
     \node[circle, draw] (D2) at (12,1) {};
  \node[circle, draw] (E1) at (13,0) {};
  \node[circle, draw] (E2) at (13,1) {};
 \node[circle, draw] (F1) at (15,0) {};
 \node[circle, draw] (F2) at (15,1) {};
 \node[circle, draw] (G1) at (13,2) {};

 \node[circle, draw] (P) at (9,4) {};

 \node (pts2) at (14,1) {$\cdots$};
 \node (pts1) at (10,1) {$\cdots$};
 \node (txt1) at (9,-1) {\hspace{.9cm}{\LARGE $k$}};
 \node (txt2) at (13,-1) {\hspace{.9cm}{\LARGE $k+1$}};

\draw (i1)  -- (a2);
\draw (i1)  -- (b2);
\draw (i1)  -- (c2); 
  \draw (a1)  -- (a2);
  \draw (b1) -- (b2);
  \draw (c1)  -- (c2);
  \draw (j1)  -- (k1);
  \draw (a1)  -- (j1);
  
  \draw (I1)  -- (A2);
 \draw (I1)  -- (B2);
\draw (I1)  -- (C2); 
  \draw (A1)  -- (A2);
  \draw (B1) -- (B2);
  \draw (C1)  -- (C2);

  \draw (G1)  -- (D2);
 \draw (G1)  -- (E2);
\draw (G1)  -- (F2); 
  \draw (D1)  -- (D2);
  \draw (E1) -- (E2);
  \draw (F1)  -- (F2);

  \draw (P)  -- (i1);
  \draw (P)  -- (I1);
  \draw (P)  -- (G1);

  \draw[underbrace style] (A1.south west) -- (C1.south east);
  \draw[underbrace style] (D1.south west) -- (F1.south east);
  
\end{tikzpicture}
}\label{fb:3kk+1}
}

}

\caption{Trees with non-log-concave independence polynomials, for $k\geq 4$.}
\label{fig:T1}       
\end{figure}

\begin{definition}\label{def:patt}
  Let $G^v$ and $H^w$ be two disjoint graphs rooted at $v$ and $w$, respectively. Let $k$ be a nonnegative integer.
  \begin{enumerate}[(i)]
    \item \label{def:patti} The \emph{scale graph of $H^w$}, denoted by $Z_k(H^w)$, is the graph constructed by taking a new vertex $v_0$ and $k$ disjoint copies of $H^w$, and joining $v_0$ to the root $w$ of each copy, as shown in Figure~\ref{fa:scale}.

    \item The graph $(G^v:H^w)_k$ is constructed by taking the disjoint union of the graph $G^v$ with the scale graph $Z_k(H^w)$ and joining the root $v$ of $G^v$ to the root $v_0$ of $Z_k(H^w)$. We view the resulting graph as rooted at $v$. See Figure~\ref{fb:patt}.

    \item If $i_1,\ldots,i_j$ is a sequence of nonnegative integers, we define a new graph recursively by
    \begin{equation}
      \label{eq:PattRec}
      (G^v:H^w)_{i_1,\ldots,i_j}=((G^v:H^w)_{i_1,\ldots,i_{j-1}}^v:H^w)_{i_j}.
    \end{equation}
    Furthermore, for any pair of nonnegative integers $m,k$, let
    \[
    (G^v:H^w)_k^{(m)}=(G^v:H^w)_{\underbrace{k,\ldots,k}_{m\text{ times}}}.
    \]
    We call these constructions \emph{pattern graphs with base graph $G$ and pendant graph $H$}.
  \end{enumerate}
\end{definition}

The following rooted trees are central to constructing infinite families of trees whose independence polynomials are not log-concave. For integers $m,t \geq 0$, we define:
\begin{itemize}
    \item $P_t^w$: path on $t$ vertices rooted at a leaf $w$;
    \item $S_{2,t}^w$: starlike graph rooted at center $w$, defined by $S_{2,t}=(P_1^w:P_1^w)_1^{(t)}$;
    \item $T_{m,t}^{v}$: tree defined by $T_{m,t}=(P_1^v:P_{2}^w)_t^{(m)}$.
\end{itemize}

Notably, the trees in~\cite{Kadrawi} correspond to $(T_{1,3}^v:P_2^w)_{k,k}$ and $(T_K^v:P_2^w)_{k,k+1}$. Here, $T_K^v$ is a tree rooted at $v$ with a single child, to which a pendant 4-vertex path and two pendant 2-vertex paths are attached. Table~\ref{tab:patt} summarizes how trees from~\cite{yo,Kadrawi2} also fit this framework. The trees $T_{m,t}$ were originally introduced by Galvin~\cite{Galvin} to resolve a conjecture in~\cite{Kadrawi2}.

  \begin{figure}[hbt]

\centerline{

\subfloat[Scale graph]{\resizebox{3.5cm}{!}{
    \begin{tikzpicture}[fullcircle/.style={circle, draw=#1, fill=#1},
    fullcircle/.default=black,underbrace style/.style={
            decorate,
            decoration={
                brace,
                amplitude=5pt, 
                raise=5pt,    
                mirror        
            },
            thick,
            color=black
        }]
       
    \node[circle,draw] (a1) at (5.5,1) {};
\node (v0) at (5.5,1.5){{\large $v_0$}};

    \node[circle, draw](b1) at  (4,0){};
    \node (w1) at (4,0.5){{\large $w$}};
    
   \node (c1) at (3.4,-2.15) {};
    \draw (4,-2) ellipse (1cm and .5cm);
    \node (H1) at (3,-1.1){{\LARGE $H$}};
    \node (pts3) at (4.1,-1.2) {$\cdots$};
    \node (c2) at (4.7,-2) {};

    \node[circle, draw](b2) at  (7,0){};
    \node (w2) at (7,0.5){{\large $w$}};
    
   \node (c3) at (6.4,-2.15) {};
    \draw (7,-2) ellipse (1cm and .5cm);
    \node (H2) at (6,-1.1){{\LARGE $H$}};
    \node (pts4) at (7.1,-1.2) {$\cdots$};
    \node (c4) at (7.7,-2) {};

     \node[circle] (d1) at (4,-2.5) {};

     \node[circle] (d3) at (7,-2.5) {};

    \node (pts1) at (5.5,-2) {$\cdots$};
  \node (txt1) at (5,-3.5) {\hspace{.9cm}{\Large $k$}};

   \draw (a1) -- (b1);
   \draw (a1) -- (b2);
 \draw (b1)  -- (c1);
 \draw (b1)  -- (c2);

  \draw (b2) -- (c3);
  \draw (b2) -- (c4);

   \draw[underbrace style] (d1.south west) -- (d3.south east);

\end{tikzpicture}
}\label{fa:scale}}

\qquad\qquad\quad

\subfloat[Pattern graph] {
  \resizebox{6cm}{!}{
    \begin{tikzpicture}[fullcircle/.style={circle, draw=#1, fill=#1},
    fullcircle/.default=black,underbrace style/.style={
            decorate,
            decoration={
                brace,
                amplitude=5pt, 
                raise=5pt,    
                mirror        
            },
            thick,
            color=black
        }]

    \node[circle, draw] (p1) at (3,3) {};
   \node (v) at (3,3.5){{\Large $v$}};
   
    \node (G0) at (0,1) {};
    \node (G1) at (1,1) {};
    \node (G2) at (2,.2) {};

    \node (G) at (1,2.5){{\LARGE $G$}};
    
    \node (pts2) at (2,1.5) {\hspace{.2cm}$\cdots$};
    
    \node[circle,draw] (a1) at (5.5,1) {};
\node (u) at (5.5,1.5){{\Large $v_0$}};

    \node[circle, draw](b1) at  (4,0){};
    \node (w1) at (4,0.5){{\Large $w$}};
    
   \node (c1) at (3.4,-2.15) {};
    \draw (4,-2) ellipse (1cm and .5cm);
    \node (H1) at (3,-1.1){{\LARGE $H$}};
    \node (pts3) at (4.1,-1.2) {$\cdots$};
    \node (c2) at (4.7,-2) {};

    \node[circle, draw](b2) at  (7,0){};
    \node (w2) at (7,0.5){{\Large $w$}};
    
   \node (c3) at (6.4,-2.15) {};
    \draw (7,-2) ellipse (1cm and .5cm);
    \node (H2) at (6,-1.1){{\LARGE $H$}};
    \node (pts4) at (7.1,-1.2) {$\cdots$};
    \node (c4) at (7.7,-2) {};

     \node[circle] (d1) at (4,-2.5) {};
   
     \node[circle] (d3) at (7,-2.5) {};

    \node (pts1) at (5.5,-2) {$\cdots$};
  \node (txt1) at (5,-3.5) {\hspace{.9cm}{\Large $k$}};


  \draw (p1)  -- (G0);

  \draw (p1)  -- (G1);
   \draw (p1)  -- (G2);

   \draw (p1)  -- (a1);
   \draw (a1) -- (b1);
   \draw (a1) -- (b2);
 \draw (b1)  -- (c1);
 \draw (b1)  -- (c2);
 
  \draw (b2) -- (c3);
  \draw (b2) -- (c4);

   \draw[underbrace style] (d1.south west) -- (d3.south east);

 \draw (0,0) ellipse (2.5cm and 1.5cm);

\end{tikzpicture}
}
\label{fb:patt}}

}
\caption{The scale graph $Z_k(H^w)$ (left) and the pattern graph $(G^v:H^w)_k$ (right), with base graph $G$ rooted at $v$ and pendant graph $H$ rooted at $w$.}
\label{fig:patt} 
\end{figure}

\subsection{Linear recurrences for the pattern graphs}

We recall standard recurrences for independence polynomials~\cite{Arocha,Gutman}. For a simple graph $G$ and a vertex $v$ of $G$,
\begin{equation}
\label{eq:Aro}
I(G; x) = I(G \setminus v; x) + x I(G \setminus N[v]; x).
\end{equation}
If $G$ is the disjoint union of graphs $G_1$ and $G_2$, then
\[
I(G;x)=I(G_1;x)I(G_2;x).
\]
To simplify notation, we identify a graph $G$ with its independence polynomial $I(G;x)$. Thus, we write $G$ in place of $I(G;x)$ when there is no risk of confusion. For instance, we might write $\alpha(G)=\deg G$. Furthermore, when applying formula~\eqref{eq:Aro} at a vertex $v$, we indicate that vertex in figures by a filled circle.

For example, the polynomial $I((G^v:H^w)_k;x)$ can be computed using the independence polynomials of the graphs shown in Figure~\ref{fig:Ps}. That is,
\begin{equation}
\label{eq:nRec}
(G^v:H^w)_k = (H\setminus w)(G^v:H^w)_{k-1} + x (H\setminus N_H[w]) \, H^{k-1} G.
\end{equation}
Consequently, we have
\begin{align*}
(G^v:H^w)_k - (H\setminus w)(G^v&:H^w)_{k-1} = H \left( x(H\setminus N_H[w]) H^{k-2} G \right) \\
&= H (G^v:H^w)_{k-1} - H(H\setminus w) (G^v:H^w)_{k-2}.
\end{align*}
This yields the following result.

\begin{theorem}\label{thm:0}
Let $G^v$ and $H^w$ be simple rooted graphs. For all $k \geq 2$,
\[
(G^v:H^w)_k = \bigl( (H\setminus w) + H \bigr)(G^v:H^w)_{k-1} - (H\setminus w)H \,(G^v:H^w)_{k-2}.
\]
This is a linear recurrence with characteristic polynomial
\[
\chi(r) = \bigl(r - (H\setminus w)\bigr)(r - H).
\]
\end{theorem}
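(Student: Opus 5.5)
The plan is to derive the recurrence from the one-step relation \eqref{eq:nRec} and then eliminate the non-recursive term.

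\textbf{Step 1: verify \eqref{eq:nRec}.} Apply \eqref{eq:Aro} to $(G^v:H^w)_k$ at the root $w$ of one of the $k$ copies of $H$.

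Deleting $w$ leaves the disjoint union of $H\setminus w$ with the graph $(G^v:H^w)_{k-1}$. Indeed, $v_0$ now carries only $k-1$ copies of $H$, and $G$ is still joined to $v_0$.

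Deleting $N[w]$ removes $w$, its neighbours inside $H$, and $v_0$. What remains is the disjoint union of $H\setminus N_H[w]$, the other $k-1$ copies of $H$ (now isolated components), and $G$. The edge from $v$ to $v_0$ has disappeared.

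Multiplicativity over disjoint unions then gives
\[
(G^v:H^w)_k=(H\setminus w)(G^v:H^w)_{k-1}+x(H\setminus N_H[w])H^{k-1}G,
\]
valid for every $k\ge 1$.

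\textbf{Step 2: eliminate the inhomogeneous term.} Write $A_k=(G^v:H^w)_k$ and $B_k=x(H\setminus N_H[w])H^{k-1}G$, so that Step 1 reads $A_k=(H\setminus w)A_{k-1}+B_k$. The term $B_k$ satisfies $B_k=H\,B_{k-1}$ for $k\ge 2$.

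For $k\ge2$ I use the relation at index $k$ and at index $k-1$. This gives
\[
A_k-(H\setminus w)A_{k-1}=B_k=H B_{k-1}=H\bigl(A_{k-1}-(H\setminus w)A_{k-2}\bigr).
\]
Rearranging yields the stated recurrence:
\[
A_k=\bigl((H\setminus w)+H\bigr)A_{k-1}-(H\setminus w)H\,A_{k-2}.
\]

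The only point needing care is the range of indices. The relation at index $k-1\ge1$ requires $A_0$ to be defined. By Definition~\ref{def:patt}, $A_0$ is $G$ joined to an isolated vertex $v_0$, and Step 1 is consistent with this at $k=1$. So all $k\ge2$ are covered.

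\textbf{Step 3: the characteristic polynomial.} The characteristic polynomial is read off directly: it is $r^2-\bigl((H\setminus w)+H\bigr)r+(H\setminus w)H$, which factors as $\bigl(r-(H\setminus w)\bigr)(r-H)$.

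I expect no real obstacle. The main risk is misidentifying the components after deleting $N[w]$, in particular forgetting that $v_0$ is removed and so $G$ becomes a separate component.
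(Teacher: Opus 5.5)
Your proposal is correct and follows the paper's own argument: you derive \eqref{eq:nRec} by applying \eqref{eq:Aro} at the root $w$ of one copy of $H$, then eliminate the term $x(H\setminus N_H[w])H^{k-1}G$ using its relation to the index-$(k-1)$ instance. Your explicit check that the $k=1$ case of \eqref{eq:nRec} holds with $(G^v:H^w)_0$ (that is, $G$ joined to a lone $v_0$) is a detail the paper leaves implicit.
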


\begin{table}[t]
  
  \caption{Patterns of known infinite families with non-log-concave independence polynomials}
\label{tab:patt}     

\begin{tabular}[tb]{lll}
\hline\noalign{\smallskip}
Structure & Pattern   \\
  \noalign{\smallskip}\hline\noalign{\smallskip}
  $3,k,k$ & $(T_{1,3}^v:P_2^w)_{k}^{(2)}$  \\
$3,k,k+1$ & $(T_{1,3}^v:P_2^w)_{k,k+1}$  \\
  $3,k,k+2$ & $(T_{1,3}^v:P_2^w)_{k,k+2}$  \\
  $3^*,k,k$ & $(T_K^v:P_2^w)_{k}^{(2)}$  \\
  $3^*,k,k+1$ & $(T_K^v:P_2^w)_{k,k+1}$  \\
  $3^*,k,k+2$ & $(T_K^v:P_2^w)_{k,k+2}$  \\
  $3^*,k,k+3$ & $(T_K^v:P_2^w)_{k,k+3}$  \\
  $T_{m,t}$ & $(P_1^v:P_2^w)_{t}^{(m)}$  \\
  $GT_{m,t}$ & $(P_2^v:S_{2,t}^w)_3^{(m)}$\\
\noalign{\smallskip}\hline
\end{tabular}
\end{table}

While Theorem~\ref{thm:0} concerns the sequence generated by increasing $k$, we obtain a similar recurrence when increasing the recursion depth $m$.

\begin{theorem}\label{thm:recM}
    Let $G^v$ and $H^w$ be simple rooted graphs. For any fixed nonnegative integer $k$, the sequence of independence polynomials $\bigl((G^v:H^w)_k^{(m)}\bigr)_{m\geq 0}$ satisfies a linear recurrence whose characteristic polynomial is
    \[
    \chi(r) = \bigl(r - H^k\bigr)\bigl(r - Z_k(H^w)\bigr).
    \]
\end{theorem}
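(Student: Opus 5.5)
Write $A_m=(G^v:H^w)_k^{(m)}$, with $A_0=G^v$, rooted at $v$. By definition, $A_m=(A_{m-1}^v:H^w)_k$. We therefore track two quantities: the full polynomial $A_m$, and the polynomial $A_m\setminus v$ of the graph with its root deleted. The plan is to obtain a first-order linear system in these two quantities and then eliminate $A_m\setminus v$.

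First apply \eqref{eq:Aro} to $A_m=(A_{m-1}^v:H^w)_k$ at the vertex $v_0$ of the new scale graph. Deleting $v_0$ leaves the disjoint union of $A_{m-1}$ and $k$ copies of $H$, which contributes $H^kA_{m-1}$. Deleting $N[v_0]$ removes $v$ and the $k$ roots $w$, leaving $(A_{m-1}\setminus v)(H\setminus w)^k$. Hence
\[
A_m = H^k A_{m-1} + x(H\setminus w)^k\,(A_{m-1}\setminus v).
\]
Next delete the root $v$ of $A_m$ (the root of $A_{m-1}$). What remains is the disjoint union of $A_{m-1}\setminus v$ and the scale graph $Z_k(H^w)$, so
\[
A_m\setminus v = Z_k(H^w)\,(A_{m-1}\setminus v).
\]
Put $B_m=x(H\setminus w)^k(A_m\setminus v)$. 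Then $B_m=Z_k(H^w)B_{m-1}$, and the vector $(A_m,B_m)^{T}$ satisfies $(A_m,B_m)^T=M(A_{m-1},B_{m-1})^T$ with the upper triangular matrix
\[
M=\begin{pmatrix} H^k & 1\\ 0 & Z_k(H^w)\end{pmatrix}.
\]
Its characteristic polynomial is $(r-H^k)(r-Z_k(H^w))$.

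By Cayley--Hamilton, $M^2-(H^k+Z_k)M+H^kZ_k=0$. Applying this to $(A_{m-2},B_{m-2})^T$ and reading off the first coordinate gives
\[
A_m=(H^k+Z_k(H^w))A_{m-1}-H^kZ_k(H^w)A_{m-2}\qquad (m\ge 2).
\]
One can also eliminate directly: from the first relation, $B_{m-1}=A_m-H^kA_{m-1}$; substituting into $B_{m-1}=Z_kB_{m-2}=Z_k(A_{m-1}-H^kA_{m-2})$ yields the same identity.

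The only delicate point is confirming that deleting $v$ really decouples $Z_k(H^w)$ from the rest. This holds because $v_0$ is attached to $A_{m-1}$ only through $v$, so no other edges join the two parts. The computation is otherwise routine.
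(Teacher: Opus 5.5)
Your proof is correct and is essentially the paper's argument. The paper applies \eqref{eq:Aro} at the vertex $v_0$ of the last attached scale graph (the filled vertex in Figure~\ref{fig:m}) and then eliminates the term $x(H\setminus w)^k(G\setminus v)\,Z_k(H^w)^{m-1}$ exactly as in the proof of Theorem~\ref{thm:0}; your auxiliary sequence $B_m$ with the triangular matrix $M$, or the direct elimination you also give, is just a clean way of packaging that same step.
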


\begin{proof}
    The proof is analogous to that of Theorem~\ref{thm:0}. It proceeds by applying recurrence~\eqref{eq:Aro} to the vertex indicated in Figure~\ref{fig:m}.
\end{proof}

\section{Polynomial rings, recursive patterns, and their linear recurrences}\label{s:recPatterns}

In this section, we use the framework of polynomial rings to compute the independence polynomials of pattern graphs defined in Definition~\ref{def:patt}. We begin by recalling the module structure of linearly recurrent sequences, summarized in the following well-known lemma.

\begin{lemma}\label{l:mod}
    Let $R$ be a commutative ring with identity and let $M$ be an $R$-module. Let $P$ be the direct product $\prod_{i=0}^\infty M_i$, where $M_i=M$ for all $i$. For a polynomial $f(r) \in R[r]$, let $\mathcal{S}(f)$ be the set of sequences in $P$ that satisfy the linear recurrence with characteristic polynomial $f(r)$. Then $\mathcal{S}(f)$ is an $R$-submodule of $P$.
\end{lemma}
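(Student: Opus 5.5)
The plan is to make the meaning of ``satisfies the linear recurrence with characteristic polynomial $f$'' precise, and then check closure under addition and under scalar multiplication.

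Write $f(r)=\sum_{j=0}^{d} c_j r^j$ with $c_j\in R$. A sequence $s=(s_i)_{i\ge 0}\in P$ lies in $\mathcal{S}(f)$ when
\[
\sum_{j=0}^{d} c_j\, s_{n+j}=0 \quad\text{for all } n\ge 0 .
\]
The computations in Theorem~\ref{thm:0} are of exactly this form. There, $c_d=1$, and the relation is rewritten as $s_{n+d}=-\sum_{j<d}c_j s_{n+j}$, with coefficients in $R=\mathbb{Z}[x]$ acting on $M=\mathbb{Z}[x]$ or a similar module.

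To streamline the argument, I introduce the shift operator $E:P\to P$, given by $(Es)_i=s_{i+1}$. This map is additive, and it commutes with the componentwise $R$-action $(a s)_i=a s_i$, so $E$ is $R$-linear. Through the evaluation $r\mapsto E$, the $R$-module $P$ becomes an $R[r]$-module. Concretely, $f$ acts as the operator $f(E)=\sum_j c_jE^j$, and this is a finite $R$-linear combination of $R$-linear maps. The condition defining $\mathcal{S}(f)$ is then just $f(E)s=0$, so $\mathcal{S}(f)=\ker f(E)$.

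Since the kernel of an $R$-linear map is an $R$-submodule, $\mathcal{S}(f)$ is an $R$-submodule of $P$. One can also verify this directly. The zero sequence clearly lies in $\mathcal{S}(f)$. If $s,t\in\mathcal{S}(f)$ and $a\in R$, then for every $n$ we have
\[
\sum_j c_j(s_{n+j}+a t_{n+j})=\sum_j c_j s_{n+j}+a\sum_j c_j t_{n+j}=0 .
\]
This step uses commutativity of $R$ to write $c_j a=a c_j$.

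The only point needing care is this use of commutativity: $a$ must pass through the coefficients $c_j$, and this is exactly where the hypothesis that $R$ is commutative enters. I do not expect any genuine obstacle. The argument is routine bookkeeping, and its main purpose is to set up the kernel description $\mathcal{S}(f)=\ker f(E)$. From that description it also follows that $\mathcal{S}(g)\subseteq\mathcal{S}(fg)$, which is presumably how the lemma will be used in the subsequent sections.
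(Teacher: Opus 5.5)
Your proof is correct. The paper gives no proof of this lemma; it only cites it as well known, so the comparison is with the standard argument, and that argument is yours. The shift $E$ is $R$-linear, and commutativity of $R$ makes each $c_jE^j$, hence $f(E)$, an $R$-linear endomorphism of $P$. Therefore $\mathcal{S}(f)=\ker f(E)$ is a submodule.

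Your closing remark is worth keeping, because the paper uses it tacitly. Evaluation at $r\mapsto E$ is a ring homomorphism $R[r]\to\operatorname{End}_R(P)$, which gives $\mathcal{S}(g)\subseteq\mathcal{S}(fg)$. This inclusion is what justifies two steps in the paper:
\begin{enumerate}[(i)]
\item in Lemma~\ref{l:zAlg}\eqref{l:zAlgii}, the claim that $(z_k^2)_{k\ge 0}$ is annihilated by $\widehat{\chi}_1=(r+b)\widehat{\chi}_0$;
\item in the proof of Lemma~\ref{l:recm}, the claim that each geometric sequence $(\lambda^k)_{k\ge 0}$ with $\chi_m(\lambda)=0$ lies in $\mathcal{S}(\chi_m)$.
\end{enumerate}
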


Building on the module structure from Lemma~\ref{l:mod}, we now construct a specific quotient ring that models the recurrence in Theorem~\ref{thm:0}.

\begin{lemma}\label{l:zAlg}
    Let $a$ and $b$ be formal variables. Define $R_0$ as the quotient of the polynomial ring in the variables $z_0, z_1, \ldots$ with coefficients in $\Z[a,b]$ by the ideal generated by the set $\{z_k - a z_{k-1} + b z_{k-2} \mid k \geq 2\}$. That is,
    \[
        R_0 = \Z[a,b][z_0, z_1, \ldots] \big/ \langle z_k - a z_{k-1} + b z_{k-2} \mid k \geq 2 \rangle.
    \]
    Then the following hold:
    \begin{enumerate}[(i)]
        \item \label{l:zAlgi} The sequence $(z_k^2)_{k \geq 0}$ in $R_0$ is $\Z[a,b]$-linearly recurrent with characteristic polynomial
              $\widehat{\chi}_0(r) = (r - b)\bigl(r^2 + (2b - a^2)r + b^2\bigr).
              $
        \item \label{l:zAlgii} For any fixed integer $m \geq 1$, the sequence $(z_k z_{k+m})_{k \geq 0}$ in $R_0$ is $\Z[a,b]$-linearly recurrent with characteristic polynomial
              $
                  \widehat{\chi}_1(r) = (r + b) \widehat{\chi}_{0}(r).
              $
    \end{enumerate}
\end{lemma}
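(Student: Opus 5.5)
The plan is to diagonalize the recurrence formally. Adjoin the two roots: work in an extension ring $S$ of $\Z[a,b]$ obtained by adjoining $\lambda,\mu$ with $\lambda+\mu=a$ and $\lambda\mu=b$. Concretely, take $S=\Z[\lambda,\mu]$, which contains $\Z[a,b]$ via $a\mapsto \lambda+\mu$, $b\mapsto\lambda\mu$; this map is injective because $a,b$ are the elementary symmetric polynomials. Set $R_0'=R_0\otimes_{\Z[a,b]}S$. In $R_0$ every $z_k$ is a $\Z[a,b]$-linear combination of $z_0,z_1$, since $z_k=az_{k-1}-bz_{k-2}$. Hence $z_k=\alpha_k z_0+\beta_k z_1$, where $(\alpha_k)$ and $(\beta_k)$ are scalar sequences in $\Z[a,b]$ satisfying $\alpha_k=a\alpha_{k-1}-b\alpha_{k-2}$ and the same for $\beta_k$.

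The key observation is that a product of sequences satisfying recurrences with characteristic polynomials having roots $\{\rho_i\}$ satisfies a recurrence whose roots are the products $\rho_i\sigma_j$. To avoid division by $\lambda-\mu$, I would argue directly with shift operators instead of Binet formulas. Let $E$ be the shift operator on sequences, so that $(E-\lambda)(E-\mu)$ annihilates $(z_k)$. For $z_k^2$, expand
\[
z_k^2=\alpha_k^2z_0^2+2\alpha_k\beta_k z_0z_1+\beta_k^2z_1^2 .
\]
By Lemma~\ref{l:mod}, it suffices to show that each scalar sequence $\alpha_k^2$, $\alpha_k\beta_k$, $\beta_k^2$ satisfies the recurrence with characteristic polynomial $\widehat\chi_0$, since $\mathcal S(\widehat\chi_0)$ is a submodule.

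For the scalar claim, consider the $2$-dimensional module of solutions $u_k$ of $u_k=au_{k-1}-bu_{k-2}$. It has basis $(\alpha_k),(\beta_k)$, and the shift acts on it by the companion matrix $C$ with trace $a$ and determinant $b$. On the symmetric square (spanned by the products $u_kv_k$), the shift acts by $\mathrm{Sym}^2C$. Its characteristic polynomial is
\[
(r-\lambda^2)(r-\lambda\mu)(r-\mu^2)=(r-b)\bigl(r^2-(a^2-2b)r+b^2\bigr)=\widehat\chi_0(r),
\]
which can be verified over $\Z[a,b]$ directly with a $3\times3$ matrix. Cayley--Hamilton then gives $\widehat\chi_0(E)$ annihilating each product sequence, which proves (i).

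For (ii), $z_kz_{k+m}$ is a $\Z[a,b]$-combination of $\alpha_k\alpha_{k+m}$, $\alpha_k\beta_{k+m}$, and so on. Each $\alpha_{k+m}$ is itself a combination of $\alpha_k$ and $\beta_k$ with coefficients depending only on $m$. So these are combinations of the products $u_kv_k$ with $u,v$ ranging over the basis, now including the nonsymmetric ones. The shift acts on the full tensor square $C\otimes C$, a $4\times4$ matrix, with characteristic polynomial
\[
(r-\lambda^2)(r-\lambda\mu)^2(r-\mu^2)=(r-b)\widehat\chi_0(r).
\]
Cayley--Hamilton therefore yields annihilation by $(r-b)\widehat\chi_0$, which is at least as strong as the stated polynomial.

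\textbf{Main obstacle.} The stated $\widehat\chi_1$ has the factor $(r+b)$, not $(r-b)$. I would first double-check whether the intended statement is $(r-b)\widehat\chi_0$. If it really is $(r+b)\widehat\chi_0$, it still holds whenever the sequence is annihilated by a divisor; but $(r+b)\widehat\chi_0$ is not a multiple of $(r-b)^2(\dots)$, so I would need to verify, via the explicit $C\otimes C$ computation, that the antisymmetric part $\alpha_k\beta_{k+m}-\beta_k\alpha_{k+m}$ is $c\,b^k$. This follows from $\det C^k=b^k$, which makes that part already killed by $(r-b)$. Hence $z_kz_{k+m}\in\mathcal S((r-b)\widehat\chi_0)$, and in fact in $\mathcal S(\widehat\chi_0)$, and therefore in any multiple such as $(r+b)\widehat\chi_0$. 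Making this antisymmetric-part reduction precise is the step needing care.
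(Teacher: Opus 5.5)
\textbf{Verdict: genuine gap in (ii), easily repaired from your own part (i).}

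Your part (i) is correct, but it takes a different route from the paper. The paper eliminates the mixed term $z_{k-1}z_{k-2}$ by hand and gets the explicit recurrence $z_k^2=(a^2-b)z_{k-1}^2+(b^2-a^2b)z_{k-2}^2+b^3z_{k-3}^2$. You instead write $z_k=\alpha_kz_0+\beta_kz_1$ and apply Cayley--Hamilton to $\mathrm{Sym}^2C$; a direct $3\times3$ computation does give $\det(rI-\mathrm{Sym}^2C)=\widehat{\chi}_0(r)$.

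The gap is in (ii). Cayley--Hamilton for $C\otimes C$ only gives annihilation by $(r-b)\widehat{\chi}_0$. Your remark that this is ``at least as strong as the stated polynomial'' is false, because $(r-b)\widehat{\chi}_0$ does not divide $(r+b)\widehat{\chi}_0$. The patch in your last paragraph does not close the gap. The Casoratian-type sequence $\alpha_k\beta_{k+m}-\beta_k\alpha_{k+m}$ is not a coefficient of $z_kz_{k+m}$: the $z_0z_1$-coefficient is the symmetric combination $\alpha_k\beta_{k+m}+\beta_k\alpha_{k+m}$. Moreover, showing that one piece equals $c\,b^k$ says nothing about the rest. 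So ``hence in $\mathcal{S}(\widehat{\chi}_0)$'' does not follow as written.

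The missing observation is that there are no `nonsymmetric' products at the level of sequences, since $\alpha_k\beta_k=\beta_k\alpha_k$. So the multiplication map from $V\otimes V$ to sequences kills every $u\otimes v-v\otimes u$ and factors through $\mathrm{Sym}^2V$. Concretely, write $E^m\alpha=p\alpha+q\beta$ and $E^m\beta=p'\alpha+q'\beta$, with $p,q,p',q'$ the entries of $C^m$. Then $z_kz_{k+m}=(\alpha_kz_0+\beta_kz_1)\bigl((p\alpha_k+q\beta_k)z_0+(p'\alpha_k+q'\beta_k)z_1\bigr)$ is an $R_0$-combination of $\alpha_k^2,\alpha_k\beta_k,\beta_k^2$. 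Your own part (i) therefore gives $z_kz_{k+m}\in\mathcal{S}(\widehat{\chi}_0)\subseteq\mathcal{S}\bigl((r+b)\widehat{\chi}_0\bigr)$.

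You need not doubt the statement: the factor $r+b$ is correct but not sharp. It comes from the paper's route, which uses $z_kz_{k+1}+bz_{k-1}z_k=az_k^2$, so applying $E+b$ to the product sequence lands in $\mathcal{S}(\widehat{\chi}_0)$. The paper then inducts on $m$ via $z_kz_{k+m}=az_kz_{k+m-1}-bz_kz_{k+m-2}$ and Lemma~\ref{l:mod}, using $\widehat{\chi}_0\mid\widehat{\chi}_1$ to absorb $z_k^2$.

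Once repaired, your argument is uniform in $m$ and yields the sharper polynomial $\widehat{\chi}_0$. The paper's elimination needs nothing beyond the defining relation and produces the recurrence coefficients explicitly.
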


\begin{figure}[bh]
  \resizebox{12cm}{!}{
    \begin{tikzpicture}[fullcircle/.style={circle, draw=#1, fill=#1},
    fullcircle/.default=black,underbrace style/.style={
            decorate,
            decoration={
                brace,
                amplitude=5pt, 
                raise=5pt,    
                mirror        
            },
            thick,
            color=black
        }]
    \node[circle, draw] (p1) at (3,3) {};
   
    \node (G0) at (-0.7,0.3) {};
    \node (G1) at (0.3,0.3) {};
    \node (G2) at (0.9,-0.5) {};

    \node (G) at (0,2){\scalebox{2.2}{ $G$}};
    
    \node (pts2) at (1.85,1.5) {$\cdots$};


    \node[circle,draw] (a1) at (4,1) {};

    \node[circle, draw](b1) at  (2.5,0){};
    
   \node (c1) at (1.9,-2.15) {};
    \draw (2.5,-2) ellipse (1cm and .5cm);
    \node (H1) at (1.5,-1.1){\scalebox{2.2}{$H$}};
    \node (pts3) at (2.6,-1.2) {$\cdots$};
    \node (c2) at (3.2,-2) {};

    \node[circle, draw](b2) at  (5.5,0){};
    
   \node (c3) at (4.9,-2.15) {};
    \draw (5.5,-2) ellipse (1cm and .5cm);
    \node (H2) at (4.5,-1.1){\scalebox{2.2}{ $H$}};
    \node (pts4) at (5.6,-1.2) {$\cdots$};
    \node (c4) at (6.2,-2) {};

  \node[circle, fill, draw](B2) at  (7.8,0){};
    
   \node (C3) at (7.2,-2.15) {};
    \draw (7.9,-2) ellipse (1cm and .5cm);
    \node (H3) at (6.8,-1.1){\scalebox{2.2}{ $H$}};
    \node (pts5) at (7.9,-1.2) {$\cdots$};
    \node (C4) at (8.6,-2) {};

     \node[circle] (d1) at (2.5,-2.5) {};
     \node[circle] (d3) at (5.5,-2.5) {};

    \node (pts1) at (4,-2) {$\cdots$};
  \node (txt1) at (3.5,-3.5) {\hspace{.9cm}{\Huge $k-1$}};



  \draw (p1)  -- (G0);

  \draw (p1)  -- (G1);
   \draw (p1)  -- (G2);

   \draw (p1)  -- (a1);
   \draw (a1) -- (b1);
   \draw (a1) -- (b2);
 \draw (b1)  -- (c1);
 \draw (b1)  -- (c2);

  \draw (b2) -- (c3);
  \draw (b2) -- (c4);
  
 \draw (a1)-- (B2);
  \draw (B2) -- (C3);
   \draw (B2) -- (C4); 
 
   \draw[underbrace style] (d1.south west) -- (d3.south east);

 \draw (0,0) ellipse (1.5cm and 1cm);
\end{tikzpicture}

\begin{tikzpicture}[fullcircle/.style={circle, draw=#1, fill=#1},
    fullcircle/.default=black,underbrace style/.style={
            decorate,
            decoration={
                brace,
                amplitude=5pt, 
                raise=5pt,    
                mirror        
            },
            thick,
            color=black
        }]
  \node[circle, draw] (p1) at (0,3) {};
   
    \node (G0) at (-3.7,.3) {};
    \node (G1) at (-2.7,.3) {};
    \node (G2) at (-2.1,-.5) {};

    \node (G) at (-3,2){\scalebox{2.2}{$G$}};
    
    \node (pts2) at (-1.1,1.5) {$\cdots$};

    
   \node[circle] (a1) at (2.5,1) {};

   \node (eq) at (-9.4,0) {\scalebox{2.2}{$=(G^v:H^w)_{k-1}(H\setminus w)+x$}}; 
   
   \node[circle, draw](b1) at  (0,0){};
    
   \node (c1) at (-0.6,-2.15) {};
    \draw (0,-2) ellipse (1cm and .5cm);
    \node (H1) at (-1,-1.1){\scalebox{2.2}{ $H$}};
    \node (pts3) at (0.1,-1.2) {$\cdots$};
    \node (c2) at (0.7,-2) {};

    \node[circle, draw](b2) at  (3,0){};
    
   \node (c3) at (2.4,-2.15) {};
    \draw (3,-2) ellipse (1cm and .5cm);
    \node (H2) at (2,-1.1){\scalebox{2.2}{$H$}};
    \node (pts4) at (3.1,-1.2) {$\cdots$};
    \node (c4) at (3.7,-2) {};

  \node[circle](B2) at  (3.3,0){};
    
   \node (C3) at (4.7,-2.15) {};
    \draw (5.4,-2) ellipse (1.3cm and .9cm);
    \node (H3) at (5.3,-2){\scalebox{1.8}{ $H\setminus N[w]$}};
    \node (C4) at (9.1,-2) {};

     \node[circle] (d1) at (0,-2.5) {};
     \node[circle] (d3) at (3,-2.5) {};

    \node (pts1) at (1.5,-2) {$\cdots$};
  \node (txt1) at (1.2,-3.5) {\hspace{.9cm}{\Huge $k-1$}};


  \draw (p1)  -- (G0);

  \draw (p1)  -- (G1);
   \draw (p1)  -- (G2);

 \draw (b1)  -- (c1);
 \draw (b1)  -- (c2);

  \draw (b2) -- (c3);
  \draw (b2) -- (c4);
  
 
   \draw[underbrace style] (d1.south west) -- (d3.south east);

 \draw (-3,0) ellipse (1.5 cm and 1cm);

\end{tikzpicture}
}
\caption{Calculation of the independence polynomial of $(G^v:H^w)_k$ by applying Eq.~\eqref{eq:Aro} at the root $w$ (filled circle) of a copy of the pendant graph $H$.}
\label{fig:Ps}       

\end{figure}

\begin{proof}
    The following relations hold in $R_0$:
    \begin{enumerate}[(i)]
        \item Let $k \geq 3$. Starting from the defining recurrence, we have:
        \begin{align}
            z_k^2 &= (a z_{k-1} - b z_{k-2})^2 \nonumber \\
                  &= a^2 z_{k-1}^2 - 2ab\,z_{k-1}z_{k-2} + b^2 z_{k-2}^2 \label{eq:z2} \\
                  &= a^2 z_{k-1}^2 - 2ab\,z_{k-2}(a z_{k-2} - b z_{k-3}) + b^2 z_{k-2}^2 \nonumber \\
                  &= a^2 z_{k-1}^2 + b(b - 2a^2) z_{k-2}^2 + 2ab^2 z_{k-3}z_{k-2}. \label{eq:fin}
        \end{align}
        To eliminate the mixed term $z_{k-3}z_{k-2}$, we apply~\eqref{eq:z2} shifted to index $k-1$, which gives:
        \[
            2ab\,z_{k-2}z_{k-3} = -z_{k-1}^2 + a^2 z_{k-2}^2 + b^2 z_{k-3}^2.
        \]
        Multiplying this by $b$ and substituting into~\eqref{eq:fin} yields:
        \begin{equation}
            \label{eq:rec}
            z_k^2 = (a^2 - b) z_{k-1}^2 + (b^2 - a^2 b) z_{k-2}^2 + b^3 z_{k-3}^2,
        \end{equation}
        which is a linear recurrence with characteristic polynomial $\widehat{\chi}_0(r)$.
 
\item For $m=1$, the relation $z_{k+1} = a z_k - b z_{k-1}$ implies
\[
    z_{k}z_{k+1} + b z_{k-1}z_k = a z_k^2.
\]
Consequently, the sum $z_{k}z_{k+1} + b z_{k-1}z_k$ satisfies the same recurrence as the sequence $(z_{k}^2)_{k\geq 0}$ in \eqref{eq:rec}; that is,
\begin{multline*}
    z_{k}z_{k+1} + b z_{k-1}z_k
    = (a^2-b)\left(z_{k-1}z_k + b z_{k-2}z_{k-1}\right) \\
    + (b^2-a^2b)\left(z_{k-2}z_{k-1} + b z_{k-3}z_{k-2}\right) + b^3\left(z_{k-3}z_{k-2} + b z_{k-4}z_{k-3}\right).
\end{multline*}
Rearranging terms to isolate $z_{k}z_{k+1}$ yields
\begin{equation}
    \label{eq:zkRec}
    z_{k}z_{k+1} = \left(a^2-2b\right) z_{k-1}z_k + b^2(2b-a^2) z_{k-3}z_{k-2} + b^4 z_{k-4}z_{k-3},
\end{equation}
which is a linear recurrence with characteristic polynomial $\widehat{\chi}_1(r)$.

For $m=2$, first observe that $\widehat{\chi}_1(r) = (r+b)\widehat{\chi}_0(r)$, so the sequence $(z_k^2)_{k \geq 0}$ is annihilated by $\widehat{\chi}_1(r)$. From the identity
\[
    z_k z_{k+2} = a z_k z_{k+1} - b z_k^2,
\]
we see that $(z_k z_{k+2})_{k \geq 0}$ is a $\Z[a,b]$-linear combination of $(z_k z_{k+1})_{k \geq 0}$ and $(z_k^2)_{k \geq 0}$. Since both sequences belong to the module $\mathcal{S}(\widehat{\chi}_1)$ (by the case $m=1$ and the observation above), Lemma~\ref{l:mod} implies that $(z_k z_{k+2})_{k \geq 0}$ is also in $\mathcal{S}(\widehat{\chi}_1)$. For $m > 2$, the result follows by induction using the relation $z_k z_{k+m} = a z_k z_{k+m-1} - b z_k z_{k+m-2}$ and Lemma~\ref{l:mod}. 
 
        \end{enumerate}
\end{proof}

\begin{figure}[t]
  \resizebox{9cm}{!}{
    \begin{tikzpicture}[fullcircle/.style={circle, draw=#1, fill=#1},
    fullcircle/.default=black,underbrace style/.style={
            decorate,
            decoration={
                brace,
                amplitude=5pt, 
                raise=5pt,    
                mirror        
            },
            thick,
            color=black
        }]

    \node[circle, draw] (p1) at (3,3) {};
    
   \node (v) at (3,3.7){{\Large $v$}};
   
    \node (G0) at (0,1) {};
    \node (G1) at (1,1) {};
    \node (G2) at (2,.2) {};

    \node (G) at (0,2.5){{\LARGE $G$}};
    
    \node (pts2) at (2,1.5) {\hspace{.2cm}$\cdots$};
    
    \node[circle,draw] (a1) at (5.5,1) {};

    \node[circle, draw](b1) at  (4,0){};
    \node (w1) at (4,0.5){{\Large $w$}};

   \node (c1) at (3.4,-2.15) {};
    \draw (4,-2) ellipse (1cm and .5cm);
    \node (H1) at (3,-1.1){{\LARGE $H$}};
    \node (pts3) at (4.1,-1.2) {$\cdots$};
    \node (c2) at (4.7,-2) {};

    \node[circle, draw](b2) at  (7,0){};
    \node (w2) at (7,0.5){{\Large $w$}};

   \node (c3) at (6.4,-2.15) {};
    \draw (7,-2) ellipse (1cm and .5cm);
    \node (H2) at (6,-1.1){{\LARGE $H$}};
    \node (pts4) at (7.1,-1.2) {$\cdots$};
    \node (c4) at (7.7,-2) {};

     \node[circle] (d1) at (4,-2.5) {};
     \node[circle] (d3) at (7,-2.5) {};

    \node (pts1) at (5.5,-2) {$\cdots$};
  \node (txt1) at (5,-3.5) {\hspace{.9cm}{\Large $k$}};


  \draw (p1)  -- (G0);

  \draw (p1)  -- (G1);
   \draw (p1)  -- (G2);

   \draw (p1)  -- (a1);
   \draw (a1) -- (b1);
   \draw (a1) -- (b2);
 \draw (b1)  -- (c1);
 \draw (b1)  -- (c2);
  \draw (b2) -- (c3);
  \draw (b2) -- (c4);
  

   \draw[underbrace style] (d1.south west) -- (d3.south east);

 \draw (0,0) ellipse (2.5cm and 1.5cm);
    \node[circle,fill,draw] (A1) at (13.5,1) {};

    \node[circle, draw](B1) at  (12,0){};
    \node (w3) at (12,0.5){{\Large $w$}};

   \node (C1) at (11.4,-2.15) {};
    \draw (12,-2) ellipse (1cm and .5cm);
    \node (HH1) at (11,-1.1){{\LARGE $H$}};
    \node (Pts3) at (12.1,-1.2) {$\cdots$};
    \node (C2) at (12.7,-2) {};

    \node[circle,draw](B2) at  (15,0){};
    \node (w4) at (15,0.5){{\Large $w$}};
    
   \node (C3) at (14.4,-2.15) {};
    \draw (15,-2) ellipse (1cm and .5cm);
    \node (HH2) at (14,-1.1){{\LARGE $H$}};
    \node (Pts4) at (15.1,-1.2) {$\cdots$};
    \node (C4) at (15.7,-2) {};

     \node[circle] (D1) at (12,-2.5) {};
   
     \node[circle] (D3) at (15,-2.5) {};

    \node (Pts1) at (13.5,-2) {$\cdots$};
  \node (Txt1) at (13,-3.5) {\hspace{.9cm}{\Large $k$}};
  \draw[underbrace style] (D1.south west) -- (D3.south east);
  
\draw (p1) -- (A1);
\draw (A1) -- (B1);
\draw (A1) -- (B2);
\draw (B1) -- (C1);
\draw (B1) -- (C2);
\draw (B2) -- (C3);
\draw (B2) -- (C4);

\node (Pts6) at (9.5,0) {$\cdots$};

\node (E2) at (14.5,-3.7) {};

\draw[underbrace style] (txt1.south west) -- (E2.south east);
 \node (Txt3) at (9.5,-4.5) {{\Large $m$}};
\end{tikzpicture}
}
\caption{The pattern graph $(G^v:H^w)_k^{(m)}$. Eq.~\eqref{eq:Aro} is applied to the filled vertex $v$.}
    \label{fig:m}
  \end{figure}

General recurrence theory~\cite{Cerruti} yields weaker results in this setting. Applying~\cite[Cor.~2.7]{Cerruti} using the Kronecker product of the companion matrix for $(z_k)_{k\geq 0}$ produces the characteristic polynomial $(r-b)^2(r^2 + (2b - a^2)r + b^2)$ for the squared sequence $(z_k^2)_{k\geq 0}$. This is a nontrivial multiple of the sharper polynomial $\widehat{\chi}_0(r)$ derived in Lemma~\ref{l:zAlg}\eqref{l:zAlgi}.

The following lemma provides a framework for recursively computing the independence polynomials of pattern graphs of the type shown in Figure~\ref{fig:m}.

\begin{lemma}\label{l:recm}
    Let $R_1 = \Z[x][z_0, z_1]$ be the polynomial ring in the variables $z_0, z_1$ with coefficients in $\Z[x]$. Let $p_1$ and $p_2$ be distinct polynomials in $\Z[x]$. Define a sequence $(z_k)_{k \geq 0}$ in $R_1$ recursively by
    \[
        z_k = (p_1 + p_2) z_{k-1} - p_1 p_2 z_{k-2} \quad \text{for } k \geq 2,
    \]
    with initial terms given by the variables $z_0$ and $z_1$. Then, for each positive integer $m$, the sequence $(z_k^m)_{k \geq 0}$ is linearly recurrent with characteristic polynomial
    \[
        \chi_m(r) = \prod_{j=0}^m \bigl(r - p_1^{\,m-j} p_2^{\,j}\bigr).
    \]
\end{lemma}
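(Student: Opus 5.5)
The plan is to solve the recurrence explicitly (a Binet-type formula) and then expand the $m$-th power. Since $p_1\neq p_2$, the characteristic polynomial $(r-p_1)(r-p_2)$ of the defining recurrence has distinct roots. Working in the fraction field $K=\mathbb{Q}(x)(z_0,z_1)$ of $R_1$, I would write
\[
z_k = A\,p_1^{\,k} + B\,p_2^{\,k},\qquad A=\frac{z_1-p_2z_0}{p_1-p_2},\quad B=\frac{p_1z_0-z_1}{p_1-p_2}.
\]
This formula holds for $k=0,1$ by direct check. For $k\ge 2$ it follows by induction, because each geometric sequence $(p_i^k)$ satisfies $p_i^k=(p_1+p_2)p_i^{k-1}-p_1p_2p_i^{k-2}$.

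Next I expand by the binomial theorem:
\[
z_k^m=\sum_{j=0}^m \binom{m}{j}A^{m-j}B^{j}\,\bigl(p_1^{\,m-j}p_2^{\,j}\bigr)^{k}.
\]
Each sequence $\bigl((p_1^{m-j}p_2^{j})^k\bigr)_{k\ge0}$ is annihilated by the linear factor $r-p_1^{m-j}p_2^{j}$. It is therefore annihilated by $\chi_m(r)$, since that linear factor divides $\chi_m$. Applying Lemma~\ref{l:mod} with $R=K$, the sequence $(z_k^m)$ is a $K$-linear combination of members of $\mathcal{S}(\chi_m)$, so it lies in $\mathcal{S}(\chi_m)$ over $K$.

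It remains to descend from $K$ to $R_1$. Write $\chi_m(r)=r^{m+1}-c_1r^m-\cdots-c_{m+1}$. The coefficients $c_i$ are elementary symmetric functions of the monomials $p_1^{m-j}p_2^j$, so they lie in $\Z[x]$. The recurrence relation $z_{k}^m=\sum_i c_i z_{k-i}^m$ is an identity among elements of $R_1$ that holds in $K\supset R_1$. Since $R_1$ is an integral domain embedded in $K$, the identity holds in $R_1$ itself.

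The main point needing care is the division by $p_1-p_2$, which is where the hypothesis $p_1\ne p_2$ enters, together with confirming that the descent step is legitimate. If one wants to avoid fraction fields entirely, an alternative is an inductive argument in the style of Lemma~\ref{l:zAlg}. In that version one shows that the $R_1$-span of the sequences $(z_k^{m-j}z_{k+1}^{j})_{k}$, for $j=0,\dots,m$, is invariant under the shift operator $E$. One then observes that, in the Binet basis, $E$ acts triangularly with eigenvalues $p_1^{m-j}p_2^j$, so Cayley--Hamilton gives annihilation by $\chi_m$. The Binet route is shorter, so I would use it.
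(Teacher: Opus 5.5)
Your proposal is correct and takes essentially the same route as the paper. Both write $z_k$ in Binet form over the fraction field, expand $z_k^m$ binomially into geometric sequences with ratios $p_1^{m-j}p_2^{j}$, and conclude with Lemma~\ref{l:mod}. Your explicit constants $A,B$ and your descent from $K$ back to $R_1$ (using that $R_1$ is a domain and $\chi_m$ has coefficients in $\Z[x]$) make precise steps the paper leaves implicit.
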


\begin{proof}
    By definition, the sequence $(z_k)_{k\geq 0}$ is linearly recurrent with characteristic polynomial $\chi_1(r) = (r - p_1)(r - p_2)$, whose roots $p_1$ and $p_2$ are distinct. Consequently, for all $k \geq 0$,
    \[
        z_k = c_1 p_1^k + c_2 p_2^k,
    \]
    where $c_1$ and $c_2$ belong to the field of fractions $F$ of $R_1$. Applying the binomial theorem gives
    \begin{equation}
        \label{eq:zm}
        z_k^m = \sum_{j=0}^m \binom{m}{j} c_1^{\,m-j} c_2^{\,j} \bigl(p_1^{\,m-j} p_2^{\,j}\bigr)^k.
    \end{equation}
    Each term $p_1^{\,m-j} p_2^{\,j}$ is a root of $\chi_m(r)$. Therefore, for each $0 \leq j \leq m$, the sequence $\bigl((p_1^{m-j} p_2^{j})^k\bigr)_{k \geq 0}$ is linearly recurrent and belongs to the $F$-vector space $\mathcal{S}(\chi_m)$. Since $(z_k^m)_{k \geq 0}$ is expressed in~\eqref{eq:zm} as an $F$-linear combination of these sequences, Lemma~\ref{l:mod} implies that $(z_k^m)_{k \geq 0}$ also lies in $\mathcal{S}(\chi_m)$.
\end{proof}

The following lemma establishes an isomorphism between the ring $R_1$ from Lemma~\ref{l:recm} and a specialization of the quotient ring introduced in Lemma~\ref{l:zAlg}.

\begin{lemma}\label{l:iso}
    Let $R_1$ and $p_1, p_2$ be as in Lemma~\ref{l:recm}, and let $J$ be the ideal of $\Z[x][z_0, z_1, \ldots]$ generated by
    \[
        \{z_k - (p_1 + p_2)z_{k-1} + p_1 p_2 z_{k-2} \mid k \geq 2\}.
    \]
    Then the $\Z[x]$-algebra homomorphism $\eta: R_1 \to \Z[x][z_0, z_1, \ldots]/J$, defined by $\eta(z_0) = [z_0]$ and $\eta(z_1) = [z_1]$, is an isomorphism.
\end{lemma}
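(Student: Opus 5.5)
The plan is to construct an explicit inverse to $\eta$. Write $S=\Z[x][z_0,z_1,\ldots]$ and $a=p_1+p_2$, $b=p_1p_2$.

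\emph{Step 1: build a map back to $R_1$.} In $R_1=\Z[x][z_0,z_1]$, define elements $y_k$ by $y_0=z_0$, $y_1=z_1$, and
\[
y_k=a\,y_{k-1}-b\,y_{k-2}\quad (k\ge 2).
\]
This is the sequence of Lemma~\ref{l:recm}. By the universal property of polynomial rings, there is a unique $\Z[x]$-algebra homomorphism $\psi:S\to R_1$ with $\psi(z_k)=y_k$ for all $k$.

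\emph{Step 2: check that $\psi$ kills $J$.} For each generator,
\[
\psi\bigl(z_k-az_{k-1}+bz_{k-2}\bigr)=y_k-ay_{k-1}+by_{k-2}=0
\]
by the recursive definition of $y_k$. Hence $J\subseteq\ker\psi$, and $\psi$ descends to a homomorphism $\bar\psi:S/J\to R_1$ with $\bar\psi([z_k])=y_k$.

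\emph{Step 3: show the two composites are identities.} First, $\bar\psi\circ\eta$ fixes $z_0$ and $z_1$, which generate $R_1$ as a $\Z[x]$-algebra, so $\bar\psi\circ\eta=\mathrm{id}_{R_1}$.

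Second, $\eta\circ\bar\psi$ fixes $[z_0]$ and $[z_1]$. To conclude it is the identity on $S/J$, I need $[z_0]$ and $[z_1]$ to generate $S/J$ as a $\Z[x]$-algebra. This follows by strong induction on $k$: in $S/J$ we have $[z_k]=a[z_{k-1}]-b[z_{k-2}]$ for $k\ge 2$. So every $[z_k]$ is a $\Z[x]$-linear combination of $[z_0]$ and $[z_1]$, and therefore lies in the image of $\eta$.

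Since each composite is the identity, $\eta$ is an isomorphism with inverse $\bar\psi$. Equivalently, surjectivity of $\eta$ comes from the generation argument in Step 3, and injectivity from the left inverse $\bar\psi$.

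\emph{Main obstacle.} The one step needing care is injectivity: one must not assume that $\eta$ is well defined and injective without exhibiting $\psi$. The explicit recursive sequence in $R_1$ settles this. Distinctness of $p_1$ and $p_2$ plays no role here; it is used only in Lemma~\ref{l:recm}.
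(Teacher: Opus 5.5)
Your proof is correct and is essentially the paper's argument. The paper simply asserts that $S/J$ together with $[z_0],[z_1]$ satisfies the universal property of the polynomial ring in two variables; your Steps 1--3 verify exactly that property for the target $R_1$, with existence of the map coming from the recursively defined $y_k$ and uniqueness from $[z_0],[z_1]$ generating $S/J$. One small quibble: $\eta$ is automatically well defined because $R_1$ is a polynomial ring, so constructing $\psi$ is needed only to prove injectivity.
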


\begin{proof}
    The quotient algebra $\Z[x][z_0, z_1, \ldots]/J$, together with the classes $[z_0]$ and $[z_1]$, satisfies the universal property of the polynomial ring $R_1$ together with its variables $z_0,z_1$.
\end{proof}

The main application of the preceding algebraic framework is the following result.

\begin{theorem}\label{thm:Lrec}
    Let $G^v$ and $H^w$ be two simple rooted graphs, and let $m$ be a nonnegative integer. Then the sequences of independence polynomials
    \[
        \left((G^v:H^w)_{k}^{(m)}\right)_{k\geq 0} \quad \text{and} \quad \left((G^v:H^w)_{k,\,k+m+1}\right)_{k\geq 0}
    \]
    are linearly recurrent. Their characteristic polynomials are
    \[
        \chi_{m}(r) = \prod_{j=0}^{m} \bigl(r - (H\setminus w)^{\,m-j} H^{\,j}\bigr)
        \quad \text{and} \quad 
        \chi(r) = \bigl(r + (H\setminus w) H\bigr) \chi_2(r),
    \]
    respectively.
\end{theorem}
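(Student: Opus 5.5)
The plan is to apply recurrence~\eqref{eq:Aro} at the root $v$, as in Figure~\ref{fig:m}, which writes each pattern graph in terms of independence polynomials of scale graphs. Then we recognize those as linearly recurrent in $k$ and use Lemmas~\ref{l:zAlg}, \ref{l:recm}, \ref{l:iso} together with the module property (Lemma~\ref{l:mod}).

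\textbf{Step 1 (decomposition at $v$).} Unwinding \eqref{eq:PattRec}, the graph $(G^v:H^w)_k^{(m)}$ is $G$ with $v$ joined to the centers $v_0$ of $m$ disjoint copies of $Z_k(H^w)$. Deleting $v$ leaves $G\setminus v$ together with $m$ copies of $Z_k(H^w)$. Deleting $N[v]$ leaves $G\setminus N[v]$ together with $km$ copies of $H$. Hence
\[
(G^v:H^w)_k^{(m)} = (G\setminus v)\, Z_k^{\,m} + x\,(G\setminus N[v])\, H^{km},
\]
where $Z_k$ denotes $I(Z_k(H^w);x)$. In the same way,
\[
(G^v:H^w)_{k,k+m+1} = (G\setminus v)\, Z_k Z_{k+m+1} + x\,(G\setminus N[v])\, H^{m+1}(H^2)^k.
\]

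\textbf{Step 2 (the scale graphs).} Applying \eqref{eq:Aro} at $v_0$ gives $Z_k = H^k + x\,(H\setminus w)^k$. So $(Z_k)_{k\ge0}$ satisfies the recurrence with characteristic polynomial $(r-p_1)(r-p_2)$, where $p_1=H\setminus w$ and $p_2=H$. These are distinct polynomials, because $H-(H\setminus w)=x\,(H\setminus N_H[w])\neq 0$. By Lemma~\ref{l:iso}, the universal sequence of Lemma~\ref{l:recm} specializes to $(Z_k)$ under $z_0\mapsto Z_0$, $z_1\mapsto Z_1$. Lemma~\ref{l:recm} then gives $(Z_k^m)_k\in\mathcal S(\chi_m)$.

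The sequence $(H^{km})_k=((H^m)^k)_k$ is annihilated by $r-H^m$. This is the $j=m$ factor of $\chi_m$, so $(H^{km})_k\in\mathcal S(\chi_m)$ as well. Since $\mathcal S(\chi_m)$ is a $\Z[x]$-module by Lemma~\ref{l:mod}, the first claim follows from Step~1. (For $m=0$ the statement is trivial.)

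\textbf{Step 3 (the mixed sequence).} Specialize Lemma~\ref{l:zAlg}\eqref{l:zAlgii}, using Lemma~\ref{l:iso}, to $a=p_1+p_2$ and $b=p_1p_2$. The one identity to verify is
\[
r^2+(2b-a^2)r+b^2=(r-p_1^2)(r-p_2^2).
\]
It gives $\widehat\chi_0(r)=(r-p_1p_2)(r-p_1^2)(r-p_2^2)=\chi_2(r)$ and $\widehat\chi_1(r)=(r+(H\setminus w)H)\chi_2(r)=\chi(r)$. Hence $(Z_kZ_{k+m+1})_k\in\mathcal S(\chi)$.

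The second term, $H^{m+1}(H^2)^k$, is annihilated by $r-H^2$, which divides $\chi$. A sequence annihilated by $f$ is annihilated by every multiple of $f$, so this term also lies in $\mathcal S(\chi)$. Lemma~\ref{l:mod} finishes the proof.

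\textbf{Main obstacle.} I expect the delicate step to be the bookkeeping in Step~1, namely identifying correctly which components survive deletion of $v$ and of $N[v]$ in the iterated construction. A second point of care is transferring the abstract recurrences of Lemmas~\ref{l:zAlg} and~\ref{l:recm} to the concrete sequence $(Z_k)$. This goes through the evaluation homomorphism supplied by Lemma~\ref{l:iso}, under which the defining relations are preserved.
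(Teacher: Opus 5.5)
Your proof is correct, but it transfers the abstract recurrences to the graphs by a different mechanism than the paper.

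\textbf{Your route.} You split at the root $v$, using the decomposition \eqref{eq:PattU} and its analogue for $(G^v:H^w)_{k,k+m+1}$. Both sequences then become $\Z[x]$-combinations of genuine products of the scale-graph polynomials $Z_k=H^k+x(H\setminus w)^k$, plus a geometric term ($H^{km}$, resp.\ $H^{m+1}(H^2)^k$). You then push Lemmas~\ref{l:recm} and~\ref{l:zAlg} forward along honest ring homomorphisms $z_k\mapsto Z_k$.

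\textbf{The paper's route.} The paper never splits at $v$. It encodes each multi-index pattern graph $(G^v:H^w)_{i_0,\ldots,i_j}$ as the monomial $z_{i_0}\cdots z_{i_j}$. Theorem~\ref{thm:0}, applied with base graph $(G^v:H^w)_{i_0,\ldots,i_{j-1}}$, shows that every generator of $J$ multiplied by a monomial is killed. The recurrences are then read off through the induced map on $\Z[x][z_0,z_1,\ldots]/J\cong R_1$.

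\textbf{What each buys.} The paper's encoding treats all multi-index patterns uniformly and never computes $Z_k$. However, the assignment $z_{i_0}\cdots z_{i_j}\mapsto I((G^v:H^w)_{i_0,\ldots,i_j};x)$ is only $\Z[x]$-linear, not multiplicative. For $G=P_1$ it sends $z_0\mapsto 1+2x$ but $z_0^2\mapsto 1+3x+x^2$. So, despite being described as an algebra homomorphism, that argument really runs on linearity alone, which happens to suffice. Your route avoids this subtlety, because every map you use is a genuine algebra homomorphism. You also check the hypothesis $p_1\neq p_2$ of Lemma~\ref{l:recm}, which the paper leaves implicit.

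\textbf{A further remark.} Once you have $Z_k=H^k+x(H\setminus w)^k$, you can bypass Lemmas~\ref{l:recm} and~\ref{l:zAlg} entirely by binomial expansion. That expansion also shows $(G^v:H^w)_{k,k+m+1}$ is already annihilated by $\chi_2$, so the extra factor $r+(H\setminus w)H$ in $\chi$ is harmless but unnecessary.
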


\begin{proof}
    Let $p_1 = I(H \setminus w; x)$ and $p_2 = I(H; x)$. Let $R_0$ and $R_1$ be the $\Z[x]$-algebras from Lemmas~\ref{l:zAlg} and~\ref{l:recm}, respectively, and let $\mathcal{M}$ denote the set of all monomials in the variables $z_0, z_1, \ldots$. Observe that for any sequence of nonnegative integers $i_0, \ldots, i_j$ and any permutation $\sigma$ of $\{0, \ldots, j\}$, there is a graph isomorphism
    \[
        (G^v:H^w)_{i_0,\ldots,i_j} \cong (G^v:H^w)_{i_{\sigma(0)},\ldots,i_{\sigma(j)}}.
    \]
    Consequently, the map $f : \mathcal{M} \to \Z[x]$ defined by
    \[
        z_{i_0} \cdots z_{i_j} \mapsto I\bigl((G^v:H^w)_{i_0,\ldots,i_j}; x\bigr)
    \]
    is well defined. This map extends uniquely to a $\Z[x]$-algebra homomorphism $\phi: \Z[x][z_0, z_1, \ldots] \to \Z[x]$.

    Now consider the following commutative diagram, where $\pi_0$ and $\pi_1$ are the canonical projection homomorphisms:
    \[
    \begin{tikzcd}[column sep=large, row sep=large]
        & \mathcal{M} \arrow[hook]{d} \arrow["f"]{dr} & \\
        \Z[a,b][z_0,z_1,\ldots] \arrow["\pi_0"]{d} \arrow["\varphi"]{r} & \Z[x][z_0,z_1,\ldots] \arrow["\phi",swap]{r} \arrow["\pi_1"]{d} & \Z[x] \\
        R_0 \arrow["\widetilde{\varphi}",swap]{r}       & \Z[x][z_0,z_1,\ldots]/J \arrow[swap,"\widetilde{\phi}"]{ur} & R_1 \arrow[tail, two heads, "\eta"']{l}
    \end{tikzcd}
    \]
    Here, $\varphi$ is the unique $\Z$-algebra homomorphism determined by $a \mapsto p_1+p_2$, $b \mapsto p_1 p_2$, and $z_k \mapsto z_k$ for all $k$. The induced homomorphism $\widetilde{\phi}$ exists because the recursive relation~\eqref{eq:PattRec} and Theorem~\ref{thm:0} imply that for any indices with $i_k \geq 2$,
    \[
        \phi(z_{i_0}\ldots z_{i_{k}}) = (p_1+p_2)\phi(z_{i_0}\ldots z_{i_{k}-1}) - p_1 p_2 \phi(z_{i_0}\ldots z_{i_{k}-2}),
    \]
    which shows that $J \subseteq \ker \phi$. By the commutativity of the diagram, we have
    \[
        (\widetilde{\phi} \circ \widetilde{\varphi})\bigl(\pi_0(z_k z_{k+m+1})\bigr) = I\bigl((G^v:H^w)_{k,k+m+1}; x\bigr)
    \]
    and, together with Lemma \ref{l:iso}
    \[
        (\widetilde{\phi} \circ \eta)(z_k^m) = I\bigl((G^v:H^w)_{k}^{(m)}; x\bigr).
    \]
    The result now follows from Lemma~\ref{l:recm}.
\end{proof}

We apply finite-dimensional representations of the formal power series algebra $\R[[x]]$ to solve the recurrences in Theorems~\ref{thm:recM} and \ref{thm:Lrec} for specific coefficients. Throughout, $M_n(R)$ denotes the algebra of $n \times n$ matrices over a commutative ring $R$.

\begin{definition}
    Let $u$ be a nonnegative integer. Let $\psi_u \colon \R[[x]] \to M_{u+1}(\R)$ be the $\R$-algebra homomorphism defined by
    \[
        \sum_{j=0}^\infty a_j x^j \mapsto
        \begin{pmatrix}
            a_0 & 0 & \cdots & 0 \\
            a_1 & a_0 & \cdots & 0 \\
            \vdots & \vdots & \ddots & \vdots \\
            a_u & a_{u-1} & \cdots & a_0
        \end{pmatrix}.
    \]
    For an additional positive integer $k$, denote by $\Psi_{k,u}$ the induced $\R$-algebra homomorphism
\[
    \Psi_{k,u} \colon M_{k}\bigl(\R[x]\bigr) \to M_{k(u+1)}(\R), \qquad \bigl(q_{i,j}(x)\bigr)_{i,j} \mapsto \bigl(\psi_u(q_{i,j})\bigr)_{i,j}.
\]
\end{definition}

\begin{proposition}\label{p:PattG}
    Let $(p_m)_{m\geq 0}$ be a sequence in $\R[[x]]$ that is linearly recurrent with characteristic polynomial $\chi(r)$ of degree $k$. Let $u$ be a nonnegative integer. Write $p_m(x)=\sum_{j=0}^\infty a_{m,j}x^j$ and set $\mathbf{v}_m=(a_{m,0},\ldots,a_{m,u})^\top$. Then, for all $m \geq k$,
    \[
        \begin{pmatrix}
            \mathbf{v}_{m-k+1}\\
            \vdots\\
            \mathbf{v}_{m-1}\\
            \mathbf{v}_{m}
        \end{pmatrix}
        = \mathbf{Q} \mathbf{J}^{\,m-k+1} \mathbf{Q}^{-1}
        \begin{pmatrix}
            \mathbf{v}_0\\
            \vdots\\
            \mathbf{v}_{k-1}
        \end{pmatrix},
    \]
    where $\mathbf{J}$ is the Jordan normal form of the image under $\Psi_{k,u}$ of the companion matrix of $\chi(r)$, and $\mathbf{Q}$ is an invertible matrix whose columns form a Jordan basis of $\R^{k(u+1)}$.
\end{proposition}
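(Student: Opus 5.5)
The plan is to lift the scalar recurrence to a matrix recurrence over $\R[[x]]$, truncate with $\psi_u$, and then diagonalize up to Jordan form over $\R$.

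Write $\chi(r)=r^k-c_1r^{k-1}-\cdots-c_k$ with $c_i\in\R[[x]]$. In the paper's applications the $c_i$ lie in $\Z[x]\subset\R[x]$, which is what $\Psi_{k,u}$ requires; in general one extends $\Psi_{k,u}$ entrywise to $M_k(\R[[x]])$ by the same formula. Let $C\in M_k(\R[[x]])$ be the companion matrix, taken in the form whose last row is $(c_k,\ldots,c_1)$ and which has $1$'s on the superdiagonal.

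Set $P_m=(p_m,p_{m+1},\ldots,p_{m+k-1})^\top$. The recurrence says exactly that $P_{m+1}=CP_m$ for all $m\ge0$. Induction then gives $P_{m-k+1}=C^{m-k+1}P_0$ for $m\ge k$, an identity in $\R[[x]]^k$.

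Next, apply $\psi_u$. The key observation is that for $q\in\R[[x]]$, the first column of $\psi_u(q)$ is the coefficient vector $(q_0,\ldots,q_u)^\top$. Since $\psi_u$ is multiplicative, $\psi_u(qp)e_1=\psi_u(q)\psi_u(p)e_1$. So the truncated coefficient vector of a product equals $\psi_u(q)$ times the truncated coefficient vector of $p$. Applying this entrywise to the vector identity, using that $\Psi_{k,u}$ is a ring homomorphism, yields
\[
(\mathbf v_{m-k+1};\ldots;\mathbf v_m)=\Psi_{k,u}(C)^{\,m-k+1}(\mathbf v_0;\ldots;\mathbf v_{k-1}).
\]
To justify this, note that $\Psi_{k,u}(C^{n})=\Psi_{k,u}(C)^n$, and that block row $i$ of $\Psi_{k,u}(A)$ applied to the stacked first-column vectors is $\sum_j\psi_u(A_{ij})\mathbf v(p_j)=\mathbf v(\sum_j A_{ij}p_j)$.

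Finally, write $\Psi_{k,u}(C)=\mathbf Q\mathbf J\mathbf Q^{-1}$ in Jordan form, so that its $(m-k+1)$-th power is $\mathbf Q\mathbf J^{m-k+1}\mathbf Q^{-1}$, and substitute.

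The main obstacle is this Jordan decomposition step. A real Jordan form with a real $\mathbf Q$ exists only if the eigenvalues are real. Here $\Psi_{k,u}(C)$ is block lower-triangular in a suitable ordering: permuting coordinates by coefficient degree gives $\psi$-images of lower-triangular Toeplitz type. Its eigenvalues are therefore those of the constant-term companion matrix $C(0)$, i.e.\ the roots of $\chi(r)|_{x=0}$. In the paper's setting those roots are products of $I(H\setminus w;0)=1$ and $I(H;0)=1$, so all eigenvalues equal $1$ (possibly $-1$ for the factor $r+(H\setminus w)H$), and these are real. For a general $\chi$, I would either add this realness hypothesis explicitly or work over $\mathbb C$; the stated identity holds verbatim with complex $\mathbf Q,\mathbf J$. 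The remaining verification, that truncation commutes with products, is routine.
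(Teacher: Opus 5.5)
Your proof is correct and follows essentially the paper's route. The paper likewise encodes truncation as the action of $\psi_u(\cdot)$ on $\mathbf{w}=(1,0,\ldots,0)^\top$, gets the one-step block identity with $\Psi_{k,u}(\mathbf{M})$, and iterates before taking the Jordan form; you iterate over $\R[[x]]$ first and truncate afterwards, which is equivalent because $\Psi_{k,u}$ is a ring homomorphism. Your remark that a real $\mathbf{Q}$ requires the roots of $\chi(r)|_{x=0}$ to be real addresses a point the paper glosses over, but one detail in it is off: the paper applies the proposition to \emph{reflected} sequences, whose constant terms are leading coefficients (e.g.\ $RP_2=x+2$), so the eigenvalues are, for instance, $1,2,4$ in Corollary~\ref{cor:Ka} rather than all $1$---still real integers, so your conclusion stands.
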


\begin{proof}
    Let $\chi(r) = r^k - \sum_{j=0}^{k-1} c_j r^j$, where each $c_j \in \R[x]$. Let $\mathbf{w} = (1, 0, \ldots, 0)^\top$ a column vector in $\R^{u+1}$. Using the $\R[[x]]$-module structure of $\R^{u+1}$ induced by the representation $\psi_u$, we obtain, for each integer $m \geq k$,
    \[
        p_m \mathbf{w} = \sum_{j=1}^{k} c_{k-j} \, p_{m-j} \mathbf{w}.
    \]
    Since $\psi_u(p_j)\mathbf{w} = \mathbf{v}_j$ for all $j$, this becomes
    \begin{equation}\label{eq:dJorA}
        \mathbf{v}_m = \sum_{j=1}^{k} c_{k-j} \, \mathbf{v}_{m-j}.
    \end{equation}
    Equation \eqref{eq:dJorA} is equivalent to the block matrix identity
    \[
        \begin{pmatrix}
            \mathbf{v}_{m-k+1} \\
            \mathbf{v}_{m-k+2} \\
            \vdots \\
            \mathbf{v}_{m}
        \end{pmatrix}
        = \Psi_{k,u}(\mathbf{M})
        \begin{pmatrix}
            \mathbf{v}_{m-k} \\
            \mathbf{v}_{m-k+1} \\
            \vdots \\
            \mathbf{v}_{m-1}
        \end{pmatrix},
    \]
    where $\mathbf{M}$ is the companion matrix of $\chi(r)$. The result follows from the Jordan normal form of $\Psi_{k,u}(\mathbf{M})$.
\end{proof}

\begin{remark}\label{r:degiii} 
Let $k,m$ be positive integers, and let $J_k(\lambda)$ be the $k\times k$ Jordan block with eigenvalue $\lambda$. Then the $(i,j)$-entry of $J_k(\lambda)^m$ is
$
\binom{m}{j-i}\lambda^{\,m+i-j}$,
 $1\leq i,j\leq k.
$

\end{remark}

\section{Applications}\label{s:app}
\subsection{Non-isolated limit points of zeros}

All families in Table~\ref{tab:patt}, with the exception of the general family $GT_{m,t}$, are constructed using the pendant graph $P_2^w$. The specific case $GT_{1,t}$, however, fits this pattern, as it is isomorphic to $(P_3^v:P_2^w)_t^{(3)}$. We prove that for these families, the sets of non-isolated limit points of the zeros of their independence polynomials coincide and form a circle. Recall that these families are known to break log-concavity at a single index~\cite{Kadrawi,Kadrawi2,Galvin,yo}.

\begin{theorem}\label{thm:nonI}
    Let $(G_k)_{k \geq 0}$ be the family $GT_{1,k}$ or any family in Table~\ref{tab:patt} with base graph $T_{1,3}^v$ or $T_K^v$. Then the set of non-isolated limit points of the zeros of their independence polynomials is the circle $|x + 1/3| = 1/3$ in the complex plane. The same property holds for the family $(T_{m,t})_{t \geq 0}$, provided there exists an integer $t_0$ such that the independence polynomial of $T_{m,t_0}$ is not log-concave.
\end{theorem}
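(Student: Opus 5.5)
The plan is to combine the explicit linear recurrences of Section~\ref{s:recPatterns} with the Beraha--Kahane--Weiss theorem on limits of zeros of linearly recurrent polynomial sequences. That theorem says the following for a sequence $p_k=\sum_i \alpha_i(x)\lambda_i(x)^k$ satisfying a nondegeneracy condition (no two characteristic roots have ratio equal to a constant root of unity). The non-isolated limit points of the zeros are exactly the points where at least two roots of maximal modulus have equal modulus. Any remaining limit points come from zeros of the coefficient $\alpha_i$ of a uniquely dominant root, and there are only finitely many of those, so they are isolated.

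In every family considered the pendant graph is $H^w=P_2^w$. So $p_1=I(H\setminus w;x)=1+x$ and $p_2=I(H;x)=1+2x$, which are distinct, and Lemma~\ref{l:recm} applies. Concretely:
\begin{itemize}
\item $(T^v:P_2^w)_k^{(2)}$ for the base graphs $T^v=T_{1,3}^v$ and $T^v=T_K^v$ fall under Theorem~\ref{thm:Lrec} with $m=2$.
\item $(T^v:P_2^w)_{k,k+c}$ for $c=1,2,3$ and the same two base graphs fall under Theorem~\ref{thm:Lrec} with $m=c-1$.
\item $GT_{1,k}\cong(P_3^v:P_2^w)_k^{(3)}$ falls under $\chi_3$.
\item $T_{m,t}=(P_1^v:P_2^w)_t^{(m)}$ falls under $\chi_m$.
\end{itemize}

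Better than working with the recurrence abstractly, I would write the solution explicitly. By Theorem~\ref{thm:0}, $z_k:=(G^v:P_2^w)_k=c_1(1+x)^k+c_2(1+2x)^k$, with $c_1,c_2\in\mathbb{Q}(x)$ determined by $z_0=I(G^v:P_2^w)_0$ and $z_1$. Then each family is a product of such terms:
\begin{align*}
(G^v:P_2^w)_k^{(m)}&=\sum_{j=0}^m\binom{m}{j}c_1^{m-j}c_2^{j}\bigl((1+x)^{m-j}(1+2x)^j\bigr)^k,\\
(G^v:P_2^w)_{k,k+c}&=c_1^2(1+x)^c(1+x)^{2k}+c_1c_2\bigl((1+x)^c+(1+2x)^c\bigr)\bigl((1+x)(1+2x)\bigr)^k+c_2^2(1+2x)^c(1+2x)^{2k}.
\end{align*}

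Next comes the key modulus comparison. Every characteristic root has the form $(1+x)^{m-j}(1+2x)^j$, up to the extra root $-(1+x)(1+2x)$ of Theorem~\ref{thm:Lrec}, which in any case has the same modulus as the $j=1$ root. Hence a root of maximal modulus is always an extreme one, $j=0$ or $j=m$, and the middle roots tie with them only where $|1+x|=|1+2x|$. Expanding $|1+x|^2=|1+2x|^2$ gives $3|x|^2+2\operatorname{Re}x=0$, which is exactly the circle $|x+1/3|=1/3$. Off this circle a single root strictly dominates; on it, all roots have the same modulus. Nondegeneracy holds because the ratio of any two distinct roots is a nonzero integer power of $(1+2x)/(1+x)$ (times $-1$ in one case), and this is nonconstant.

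The main obstacle is checking that the coefficients of the two extreme roots, $c_1^m$ and $c_2^m$ (respectively $c_1^2(1+x)^c$ and $c_2^2(1+2x)^c$), are not identically zero. If one of them vanished, part of the circle could fail to be a limit set, or the dominant-root analysis would change. I would compute $c_1$ and $c_2$ from recurrence~\eqref{eq:nRec}: $z_k=(1+x)z_{k-1}+x(1+2x)^{k-1}G$ gives $c_2=x\,G/x=G\cdot(\text{explicit rational factor})$ and $c_1=z_0-c_2$. For each concrete base graph ($T_{1,3}$, $T_K$, $P_3$, and the iterates used in $T_{m,t}$) I would then verify nonvanishing directly; the base graph in the $T_{m,t}$ recursion is $(P_1^v:P_2^w)_t^{(m-1)}$ evaluated appropriately. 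For $T_{m,t}$ I expect the hypothesis that some $T_{m,t_0}$ is not log-concave to be used precisely here, to exclude small $m$ where the relevant coefficient degenerates or the family collapses. If a direct argument is messy, I would fall back on comparing degrees or leading terms of $c_1$ and $c_2$ in $x$. Finally, the points on the circle where the sum of tying terms might cancel identically are ruled out because at most finitely many coefficient zeros exist, so the whole circle is obtained as the set of non-isolated limit points.
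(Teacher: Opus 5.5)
\textbf{Error in the explicit formulas.} Your overall plan is sound: write each family as an explicit exponential sum in the roots $(1+x)^{m-j}(1+2x)^j$, show that at least two coefficients are nonzero, and apply Beraha--Kahane--Weiss. However, the explicit formulas underlying the step you call the main obstacle are wrong.

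The tree $(G^v:P_2^w)_k^{(m)}$ is not $m$ disjoint copies of $(G^v:P_2^w)_k$. All $m$ scale graphs hang from the root $v$ of a \emph{single} copy of $G$. In the proof of Theorem~\ref{thm:Lrec}, the map $\phi$ sends the formal monomial $z_k^m$ to $I((G^v:H^w)_{k,\ldots,k};x)$. Although it is called an algebra homomorphism there, it is only $\Z[x]$-linear on monomials, which is all that is needed to transport a linear recurrence. In particular $\phi(z_k^m)\neq\phi(z_k)^m$, so you cannot substitute $z_k=c_1(1+x)^k+c_2(1+2x)^k$ and expand the power. Already for $G=P_1$, $m=2$, $k=0$ this fails: $(P_1^v:P_2^w)_{0,0}=P_3$ has $I=1+3x+x^2$, while $z_0^2=(1+2x)^2$. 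Likewise, your formula for $(G^v:P_2^w)_{k,k+c}$ is the independence polynomial of the \emph{disjoint union} of $(G^v:P_2^w)_k$ and $(G^v:P_2^w)_{k+c}$. So $c_1^m$, $c_2^m$, $c_1^2(1+x)^c$, $c_2^2(1+2x)^c$ are not the coefficients whose nonvanishing you need.

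\textbf{The fix.} Apply \eqref{eq:Aro} at $v$, as in \eqref{eq:PattU}, using $Z_i(P_2^w)=(1+2x)^i+x(1+x)^i$ and $I(Z_i(P_2^w)\setminus v_0)=(1+2x)^i$. This gives
\[
I\bigl((G^v:P_2^w)_{i_1,\ldots,i_s};x\bigr)=I(G\setminus v)\prod_{q=1}^{s}\bigl((1+2x)^{i_q}+x(1+x)^{i_q}\bigr)+x\,I(G\setminus N[v])\,(1+2x)^{i_1+\cdots+i_s}.
\]
For $(G^v:P_2^w)_k^{(m)}$, the coefficient of $\bigl((1+x)^{m-j}(1+2x)^j\bigr)^k$ is $\binom{m}{j}x^{m-j}I(G\setminus v)$ for $j<m$, and $I(G)$ for $j=m$. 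For $(G^v:P_2^w)_{k,k+c}$, the coefficients of $(1+x)^{2k}$, $\bigl((1+x)(1+2x)\bigr)^k$ and $(1+2x)^{2k}$ are $x^2I(G\setminus v)(1+x)^c$, $xI(G\setminus v)\bigl((1+x)^c+(1+2x)^c\bigr)$ and $I(G)(1+2x)^c$. All of these are nonzero, since independence polynomials have constant term $1$.

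Note also that the root $-(1+x)(1+2x)$ never actually occurs. This matters: its ratio to $(1+x)(1+2x)$ is the constant $-1$, so your nondegeneracy parenthetical is false for that pair. The root must be discarded, not covered by that argument.

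\textbf{Comparison with the paper.} With this correction, your route genuinely differs from the paper's and proves more. The paper computes no coefficients. It argues that the minimal polynomial of the recurrence in Theorem~\ref{thm:Lrec} has at least two roots, because a single root $\lambda$ would give $I(G_k;x)=I(G_0;x)\lambda^k$. That polynomial either has negative coefficients or is log-concave, contradicting the known non-log-concavity of these families; this is exactly where the hypothesis on $T_{m,t_0}$ enters. Your corrected computation needs no such external input and applies to every base graph with pendant $P_2^w$. It also shows that for $T_{m,t}$ the conclusion holds for every $m\geq1$, so the non-log-concavity hypothesis is needed only to exclude the degenerate case $m=0$.
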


\begin{proof}
    Let $p_1=I(P_1;x)$ and $p_2=I(P_2;x)$. By Theorem~\ref{thm:Lrec} with $H=P_2$, the sequence $(I(G_k;x))_{k\geq 0}$ satisfies a linear recurrence with characteristic roots in the set $\{p_1^j p_2^{m-j}, -p_1 p_2\}$. If the minimal polynomial has a single root $\lambda$, then $I(G_k;x) = I(G_0;x)\lambda^k$. However, $\lambda = -p_1 p_2$ implies negative coefficients, which is impossible, while $\lambda = p_1^j p_2^{m-j}$ implies that $I(G_k;x)$ is log-concave (being a product of log-concave polynomials), contradicting established results~\cite{Kadrawi2,Kadrawi,yo}. Thus, the minimal polynomial must have at least two distinct roots. By the Beraha–Kahane–Weiss theorem~\cite{Beraha}, the non-isolated limit points of the zeros satisfy the equimodular condition $|\lambda_i(x)| = |\lambda_j(x)|$ for distinct roots. Dividing by common factors reduces this to $|p_1| = |p_2|$, or explicitly $|1+x|=|1+2x|$. By applying the inversion map $x\mapsto w=1/x$, this equation becomes $|1+w|=|2+w|$, which describes the line $\ReP w=-3/2$. This line maps under inversion onto the circle $|x+1/3|=1/3$. 
\end{proof}

The existence of integers $m$ and $t_0$ for which the independence polynomial of $T_{m,t_0}$ is non-log-concave is guaranteed by~\cite[Thm.~2.1]{Galvin}.

Despite Theorem~\ref{thm:nonI}, the property that the non-isolated limit points of the zeros form the circle $|x+1/3|=1/3$ is not a necessary condition for an infinite family of trees to break log-concavity in their independence polynomials at one index. We provide a counterexample in Corollary~\ref{cor:cNonI}.

\subsection{Trees breaking log-concavity at one index}

The linear recurrences derived in Theorem~\ref{thm:Lrec} provide an alternative proof of the result from \cite{Kadrawi} that trees with the structure $3,k,k$ have non-log-concave independence polynomials. To analyze the failure of log-concavity near the leading terms of these polynomials, it is convenient to work with their \emph{reflected} forms.

\begin{definition}
    Let $p \in \R[x]$ be a polynomial of degree $n$. The \emph{reflected polynomial} of $p$, denoted by $Rp$, is defined as
    $
        (Rp)(x) = x^n p(1/x).
    $
\end{definition}

From the definition, it follows that the reflection operator $R$ is multiplicative: for any polynomials $p, q \in \R[x]$,
\[
    R(pq) = (Rp)(Rq).
\]
Additionally, $R$ satisfies an additive property under degree constraints: if $\deg p \ge \deg q$ and $\deg(p+q) = \deg p$, then
\[
    R(p+q) = Rp + x^{\deg p - \deg q} Rq.
\]

\begin{definition}
    Let $(p_k)_{k\geq 0}$ be a linearly recurrent sequence of polynomials in $\R[x]$ with characteristic polynomial $\chi(r)=r^n-a_{n-1} r^{n-1}-\cdots-a_0$. We call such a sequence \emph{reflection-compatible} if the reflected sequence $(Rp_k)_{k\geq 0}$ is linearly recurrent with characteristic polynomial $\chi_R(r)=r^n-Ra_{n-1}r^{n-1}-\cdots-Ra_0$.
\end{definition}

\begin{lemma}\label{l:Rcomp}
    Let $(p_k)_{k\geq 0}$ be a linearly recurrent sequence of polynomials in $\R[x]$ with characteristic polynomial $\chi(r)=r^n-a_{1} r^{n-1}-\cdots-a_n$. If there exist integers $\ell$ and $r$ such that for each nonnegative integer $k$, $\deg p_k=\ell k+r$, and for each $j=1,\ldots,n$, $\deg a_j=\ell j$, then the sequence $(p_k)_{k\geq 0}$ is reflection-compatible.
\end{lemma}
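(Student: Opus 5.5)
We substitute $x\mapsto 1/x$ into the recurrence and clear denominators by multiplying by a single power of $x$; this turns each term into a reflected polynomial, with no extra powers of $x$ left over. Fix $k\ge n$. The recurrence reads
\[
p_k = a_1 p_{k-1} + a_2 p_{k-2} + \cdots + a_n p_{k-n}.
\]
Put $N=\deg p_k=\ell k+r$ and apply the reflection operator in the form $(Rp)(x)=x^{\deg p}p(1/x)$. That is, replace $x$ by $1/x$ and multiply both sides by $x^{N}$:
\[
x^{N}p_k(1/x)=\sum_{j=1}^n x^{N}a_j(1/x)\,p_{k-j}(1/x).
\]

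The key degree bookkeeping is $\deg a_j+\deg p_{k-j}=\ell j+\ell(k-j)+r=\ell k+r=N$ for every $j$. We may therefore split $x^{N}=x^{\ell j}\cdot x^{\ell(k-j)+r}$ in the $j$-th term. This gives
\[
x^{N}a_j(1/x)\,p_{k-j}(1/x)=\bigl(x^{\deg a_j}a_j(1/x)\bigr)\bigl(x^{\deg p_{k-j}}p_{k-j}(1/x)\bigr)=(Ra_j)(Rp_{k-j}).
\]
The left-hand side is $Rp_k$, since $N=\deg p_k$. Hence
\[
Rp_k=\sum_{j=1}^n (Ra_j)\,Rp_{k-j}\qquad (k\ge n).
\]
This is exactly the statement that $(Rp_k)_{k\geq 0}$ is linearly recurrent with characteristic polynomial $\chi_R(r)=r^n-Ra_1r^{n-1}-\cdots-Ra_n$, so the sequence is reflection-compatible.

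The only point needing care is that the reflection must be taken with respect to the actual degrees. The hypotheses $\deg p_k=\ell k+r$ and $\deg a_j=\ell j$ are used precisely to guarantee that every summand has total degree exactly $N$. This is what prevents stray factors $x^{\deg p-\deg q}$ of the kind appearing in the additive rule $R(p+q)=Rp+x^{\deg p-\deg q}Rq$. I therefore do not use that additive rule at all: cancellation of leading terms on the right-hand side is harmless, because we work directly with $x^{N}p(1/x)$ as a polynomial identity. Implicitly the degrees must be nonnegative; for the degenerate case $a_j=0$ the convention $\deg a_j=\ell j$ is unavailable, but then that term simply vanishes on both sides under the convention $R0=0$. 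Beyond these conventions there is no real obstacle; it is a direct substitution.
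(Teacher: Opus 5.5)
Your proof is correct and follows the same idea as the paper's: the hypotheses force every summand $a_j p_{k-j}$ to have degree exactly $\deg p_k=\ell k+r$, so reflecting the recurrence term by term gives $Rp_k=\sum_{j=1}^n (Ra_j)(Rp_{k-j})$. Your direct computation with $x^{N}p(1/x)$ is slightly more careful than the paper's appeal to the additive rule for $R$, because it does not require the partial sums on the right-hand side to avoid cancellation of leading terms.
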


\begin{proof}
    If $k\geq n$, then $p_k=\sum_{j=1}^n a_{j}p_{k-j}$. From the hypothesis, it follows that each term on the right-hand side of this equation has the same degree: $\deg p_k$. Then, the additive and multiplicative properties of the reflection operator $R$ lead to the result.
\end{proof}

\begin{example}\label{e:1}
    Let $G^v$ and $H^w$ be two rooted simple graphs. For an arbitrary fixed positive integer $k$, define $U_m = (G^v:H^w)_k^{(m)}$.
    If $\alpha(H) > \alpha(H\setminus w)$ and $\alpha(G\setminus v) > \alpha(G\setminus N[v])$, then the sequence of independence polynomials $(U_m)_{m\geq 0}$ is reflection-compatible.
\end{example}

\begin{proof}
    Applying \eqref{eq:Aro} to the scale graph $Z_k(H^w)$ at the vertex $v_0$ (see Definition~\ref{def:patt}\eqref{def:patti}) gives
    \[
        Z_k(H^w) = H^k + x(H\setminus w)^k.
    \]
    Since $\alpha(H) \ge 1 + \alpha(H\setminus w)$, we have $k\alpha(H) \ge 1 + k\alpha(H\setminus w)$; therefore $\alpha(Z_k(H^w)) = k\alpha(H)$. Applying \eqref{eq:Aro} again, now to the graph $(G^v:H^w)_k^{(m)}$ at the vertex $v$, we obtain
    \begin{equation}\label{eq:PattU}
        U_m = (G\setminus v)\,Z_k(H^w)^m + x\,(G\setminus N[v])\,H^{km}.
    \end{equation}
    Consequently, $\deg U_m = km\alpha(H) + \alpha(G\setminus v)$.
    Theorem~\ref{thm:recM} tells us that the sequence $(U_m)_{m\geq 0}$ is linearly recurrent with characteristic polynomial
    \[
        \chi(r) = r^2 - e_1\!\bigl(H^k,Z_k(H^w)\bigr)r +e_2\!\bigl(H^k,Z_k(H^w)\bigr),
    \]
    where $e_1$ and $e_2$ are the elementary symmetric polynomials in two variables.
    Since $\deg e_j\!\bigl(H^k,Z_k(H^w)\bigr) = jk\alpha(H)$ for $j=1,2$, we may apply Lemma~\ref{l:Rcomp} to conclude that the sequence is reflection-compatible.
\end{proof}

\begin{example}\label{e:2}
    Let $G^v$ and $H^w$ be two rooted simple graphs and let $m$ be an arbitrary fixed nonnegative integer. Let $L_k=(G^v:H^w)_k^{(m)}$. If $\alpha(G\setminus v)>\alpha(G\setminus N[v])$ and $\alpha(H)=\alpha(H\setminus w)$, then the sequence of independence polynomials $(L_k)_{k\geq 0}$ is reflection-compatible.
\end{example}

\begin{proof}
    Proceeding as in the proof of Example \ref{e:1}, from the equation $Z_k(H^w)=H^k+x(H\setminus w)^k$ and the condition $\alpha(H)=\alpha(H\setminus w)$, we obtain $\alpha(Z_k(H^w))=1+k\alpha(H)$. This implies that $\alpha(L_k)=mk\alpha(H)+m+\alpha(G\setminus v)$, by \eqref{eq:PattU}.
    Further, from Theorem \ref{thm:Lrec}, we know that the sequence $(L_k)_{k\geq 0}$ is linearly recurrent with characteristic polynomial
    \[
        \chi(r)=r^{m+1}-\widetilde{e}_1r^m+\cdots+(-1)^{m+1}\widetilde{e}_{m+1},
    \]
    where $\widetilde{e}_j=e_j\bigl((H\setminus w)^m, (H\setminus w)^{m-1}H,\ldots,H^m\bigr)$ for $j=1,\ldots,m+1$, and $e_1,e_2,\ldots$ are the elementary symmetric polynomials. Each $\deg \widetilde{e}_j=jm\alpha(H)$. Thus, since $m$ is fixed, we can use Lemma \ref{l:Rcomp} to obtain the result.
\end{proof}

To apply these reflection properties, we need to compute the degrees of the relevant independence polynomials.

\begin{lemma}\label{l:deg}
    Let $k$ and $m$ be nonnegative integers.
    \begin{enumerate}[(i)]
        \item\label{l:degi} The independence polynomial of $(T_{1,3}^v:P_2^w)_{k,k+m}$ is monic and has degree $2k + m + 6$.
        \item\label{l:degii} The independence polynomial of $S_{2,k}$ is monic and has degree $k + 1$.
    \end{enumerate}
\end{lemma}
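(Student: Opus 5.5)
Both parts will follow from one argument. We bound $\alpha$ by partitioning the vertex set into pieces whose independence numbers are easy to read off. Then we show that a maximum independent set must hit every piece in a forced way, so it is unique. Since the leading coefficient of $I(G;x)$ counts independent sets of size $\alpha(G)$, uniqueness gives monicity, and the size gives the degree.

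For \eqref{l:degi}, first unwind the construction. $T_{1,3}^v=(P_1^v:P_2^w)_3$ consists of the root $v$ joined to a vertex $u_1$, and $u_1$ carries $3$ pendant copies of $P_2$, each attached at its leaf $w$. Applying Definition~\ref{def:patt} twice more, $(T_{1,3}^v:P_2^w)_{k,k+m}$ is obtained by joining $v$ to two further centers $u_2,u_3$. The center $u_2$ carries $k$ pendant copies of $P_2$ and $u_3$ carries $k+m$ of them, again each attached at its root $w$. When $k=0$ the center $u_2$ is still present, as a leaf.

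Partition the vertices into the star $K_{1,3}$ on $\{v,u_1,u_2,u_3\}$ and the $3+k+(k+m)=2k+m+3$ edges given by the copies of $P_2$. Each edge contributes at most $1$ to an independent set, and the star contributes at most $3$. Hence $\alpha\le 2k+m+6$.

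The set consisting of $u_1,u_2,u_3$ together with the far (non-root) endpoint of every copy of $P_2$ is independent. The centers are pairwise nonadjacent, and the far endpoints are adjacent only to the roots $w$. So $\alpha=2k+m+6$.

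For uniqueness, equality forces the star to contribute exactly $3$. The only independent $3$-set in $K_{1,3}$ is its set of leaves, so the set contains $u_1,u_2,u_3$. Every root $w$ is adjacent to some $u_i$, so each copy of $P_2$ must contribute its far endpoint. Thus the maximum independent set is unique, and the polynomial is monic of degree $2k+m+6$.

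For \eqref{l:degii}, $S_{2,k}=(P_1^w:P_1^w)_1^{(k)}$ is the spider with center $w$ and $k$ legs of length $2$. Partition its vertices into $\{w\}$ and the $k$ leg edges to get $\alpha\le k+1$, with equality attained by $w$ together with the $k$ leaves. In a set of size $k+1$, every leg edge must contribute one vertex, and $w$ must be included. Since $w$ is adjacent to the inner vertex of each leg, each leg contributes its leaf, so the maximum set is unique. Hence $S_{2,k}$ is monic of degree $k+1$.

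The only real care needed is reading Definition~\ref{def:patt} correctly. In particular, the centers $u_i$ are all adjacent to the root $v$ and not to each other, which is what makes the $K_{1,3}$ partition valid. The degenerate cases $k=0$ and $m=0$ need no separate treatment, because the centers still exist.
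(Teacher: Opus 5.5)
Your proposal is correct and follows the paper's proof, which exhibits the same unique maximum independent sets: the three children of the root plus all the far endpoints of the $P_2$ copies for (i), and the center plus its $k$ leaves for (ii). Your partition into the star $K_{1,3}$ and the $P_2$ edges (respectively, the center and the leg edges) supplies the maximality and uniqueness justification that the paper simply asserts.
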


\begin{proof}
    The unique maximum independent set in the tree $(T_{1,3}^v:P_2^w)_{k,k+m}$ consists of the three children of the root together with their $2k+m+3$ grandchildren (see Figure~\ref{fig:T1} for the case $m=0$), giving a total of $2k+m+6$ vertices. Therefore, the independence polynomial is monic of this degree. For $S_{2,k} = (P_1^v:P_1^w)_1^{(k)}$, the unique maximum independent set consists of the root $v$ of its base graph together with its $k$ grandchildren. Hence, its independence polynomial is monic of degree $k + 1$.
\end{proof}

\begin{corollary}[Kadrawi et al.~\cite{Kadrawi}]\label{cor:Ka}
  For any $k \geq 4$, the independence polynomial of the tree with structure $3,k,k$ has degree $2k+6$ and is not log-concave; log-concavity is broken at index $2k+5$.
\end{corollary}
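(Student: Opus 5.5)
By Lemma~\ref{l:deg}\eqref{l:degi} with $m=0$, the tree with structure $3,k,k$, that is $L_k=(T_{1,3}^v:P_2^w)_k^{(2)}$, has a monic independence polynomial of degree $n=2k+6$. This settles the degree claim. Write $L_k=\sum_j s_j x^j$. Log-concavity fails at index $2k+5=n-1$ exactly when $s_{n-1}^2<s_{n-2}s_n$. Passing to the reflected polynomial $RL_k=\sum_j r_jx^j$ with $r_j=s_{n-j}$, we have $r_0=1$, so the claim becomes $r_1^2<r_2$. The plan is therefore to compute the first three coefficients $r_0,r_1,r_2$ of $RL_k$ in closed form as functions of $k$.

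To do this, I first check the hypotheses of Example~\ref{e:2} with $G=T_{1,3}^v$, $H=P_2^w$ and $m=2$. For the pendant graph, $H\setminus w=P_1$, so $\alpha(H)=\alpha(H\setminus w)=1$. For the base graph, $G\setminus v=Z_3(P_2^w)$ has independence number $4$, while $G\setminus N[v]$ consists of three disjoint copies of $P_2$ and has independence number $3$. Hence $(L_k)_{k\ge 0}$ is reflection-compatible.

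By Theorem~\ref{thm:Lrec}, $(L_k)$ has characteristic roots $p_1^2,p_1p_2,p_2^2$ with $p_1=1+x$ and $p_2=1+2x$. Reflecting these roots gives $Rp_1=1+x$ and $Rp_2=2+x$. So $(RL_k)$ satisfies the recurrence with roots $(1+x)^2$, $(1+x)(2+x)$ and $(2+x)^2$. I would then apply Proposition~\ref{p:PattG} with $u=2$, i.e.\ work modulo $x^3$, to the vectors $(r_0,r_1,r_2)$. The eigenvalues of $\Psi_{3,2}$ of the companion matrix are $1,2,4$, each with a Jordan block of size $3$. Using Remark~\ref{r:degiii}, this gives
\[
r_j(k)=A_j(k)+B_j(k)\,2^k+C_j(k)\,4^k \qquad (j=0,1,2),
\]
where $A_j,B_j,C_j$ are polynomials in $k$ of degree at most $j$. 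These are determined by the initial values computed directly from $L_0,L_1,L_2$ (with a few more terms available as a check).

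Since $r_0=1$ for every $k$, the $2^k$ and $4^k$ parts must cancel in $r_0$, and similar cancellations are expected to simplify $r_1$ and $r_2$. As an independent sanity check of the closed forms, I would also read the top coefficients combinatorially. Near the unique maximum independent set, the sets of size $n-1$ and $n-2$ are obtained by removing vertices or by swapping a parent for a child; this suggests that $r_1$ is linear in $k$ and $r_2$ is quadratic in $k$, with a possible $2^k$-type contribution coming from swaps around the two scale roots. Finally, I would verify $r_1^2<r_2$, i.e.\ $r_2-r_1^2>0$, for all $k\ge 4$. This reduces to showing that an explicit expression in $k$ (possibly involving $2^k$) is positive.

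The main obstacle is this last inequality together with the exact bookkeeping of the constants. The difference $r_2-r_1^2$ is presumably small, for instance linear in $k$ minus a constant, and it must fail for $k\le 3$ in accordance with the threshold $k\ge 4$. I would first derive the exact closed forms and then prove positivity for $k\ge4$ by checking $k=4$ and showing that the difference is increasing in $k$.
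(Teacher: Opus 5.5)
Your outline follows the paper's proof step for step. It checks the hypotheses of Example~\ref{e:2}, uses the reflected recurrence with roots $(1+x)^2$, $(1+x)(2+x)$, $(2+x)^2$, applies Proposition~\ref{p:PattG} with $u=2$ (eigenvalues $1,2,4$, one $3\times 3$ Jordan block each), and fits the result to $\mathbf{v}_0,\mathbf{v}_1,\mathbf{v}_2$. The gap is that you stop exactly where the content of the proof begins, namely the closed forms and the inequality, and your forecasts of them are wrong.

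One bookkeeping point first. Your ansatz has $3+6+9$ free coefficients against only nine initial data, so you cannot fit each $r_j$ separately. The fit has to go through the coupled system $RL_k\equiv\sum_i c_i\lambda_i^k \pmod{x^3}$ with $c_i\in\R[x]/(x^3)$, which is what the paper's $\mathbf{Q}$ encodes.

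With $\mathbf{v}_0=(1,13,46)^\top$, $\mathbf{v}_1=(1,17,124)^\top$, $\mathbf{v}_2=(1,23,321)^\top$ one finds $r_1=2^{k+1}+2k+11$ and $r_2=9\cdot 4^k+(3k+22)2^k+2k^2+21k+15$. Hence
\[
r_2-r_1^2=5\cdot 4^k-(5k+22)2^k-2k^2-23k-106 .
\]
So the $2^k$ part of $r_1$ is its leading term, not a correction. The difference is not a small linear quantity either: it equals $-169$ at $k=3$ and $378$ at $k=4$, and it grows like $5\cdot 4^k$.

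Your combinatorial heuristic looks only at sets near the unique maximum independent set, so it misses the decisive family. These are the $2^{2k+3}=8\cdot 4^k$ independent sets of size $n-2$ that contain the root $v$: $v$ together with one vertex from each of the $2k+3$ copies of $P_2$ left after deleting $N[v]$. Add the $4^k$ sets that omit $v$ and both scale roots, and you get the leading coefficient $9$ of $4^k$ in $r_2$, which beats the coefficient $2^2=4$ of $4^k$ in $r_1^2$. Without the family through $v$, $r_2-r_1^2$ would be negative for large $k$. Once the formula is in hand, your monotonicity plan works. Alternatively, use the paper's split $3\cdot 4^k>(5k+22)2^k$ and $2\cdot 4^k>2k^2+23k+106$ for $k\ge 4$.

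You can also reach the closed forms without any Jordan basis. Expanding at $v$ gives $L_k=S_{2,3}\,Z_k(P_2^w)^2+x\,P_2^{2k+3}$, so
\[
RL_k=RS_{2,3}\bigl((1+x)^k+x(2+x)^k\bigr)^2+x^2(2+x)^{2k+3}, \qquad RS_{2,3}\equiv 1+11x+15x^2 \pmod{x^3}.
\]
The formulas for $r_1$ and $r_2$ can then be read off directly modulo $x^3$. This form also makes the role of the $8\cdot 4^k$ term transparent.
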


\begin{proof}
    Let $L_k = (T_{1,3}^v:P_2^w)_{k}^{(2)}$. Since
    \[
        \alpha(T_{1,3}^v\setminus v) = \alpha(S_{2,3}) = 4 > 3 = \deg(P_1^3) = \alpha(T_{1,3}^v\setminus N[v])
    \]
and $\alpha(P_2^w\setminus w) = \alpha(P_1) = \alpha(P_2)$, Example~\ref{e:2} implies that $(RL_k)_{k\geq 0}$ is linearly recurrent with characteristic polynomial $\prod_{j=0}^2 (r - RP_1^{\,2-j} RP_2^{\,j})$. Write $RL_k = \sum_{j=0}^\infty a_{k,j} x^j$. Applying Proposition~\ref{p:PattG} with $u=2$ to this reflected sequence gives
    \begin{equation}\label{eq:3kk}
        \begin{pmatrix}
            \mathbf{v}_{k-2} \\
            \mathbf{v}_{k-1} \\
            \mathbf{v}_k
        \end{pmatrix}
        = \mathbf{Q} \diag\bigl( J_3(1)^{k-2},\; J_3(2)^{k-2},\; J_3(4)^{k-2} \bigr) \mathbf{Q}^{-1}
        \begin{pmatrix}
            \mathbf{v}_0 \\
            \mathbf{v}_1 \\
            \mathbf{v}_2
        \end{pmatrix},
    \end{equation}
    where
    \[
        \mathbf{v}_k = \begin{pmatrix}
            a_{k,0} \\ a_{k,1} \\ a_{k,2}
        \end{pmatrix}, \qquad
        \mathbf{Q} = \begin{pmatrix}0 & 0 & 32 & 0 & 0 & 8 & 0 & 0 & 2\\0 & 64 & -128 & 0 & 24 & -24 & 0 & 8 & -4\\128 & -224 & 320 & 72 & -64 & 46 & 32 & -14 & 5\\0 & 0 & 32 & 0 & 0 & 16 & 0 & 0 & 8\\0 & 64 & -64 & 0 & 48 & -24 & 0 & 32 & -8\\128 & -96 & 96 & 144 & -56 & 28 & 128 & -24 & 6\\0 & 0 & 32 & 0 & 0 & 32 & 0 & 0 & 32\\0 & 64 & 0 & 0 & 96 & 0 & 0 & 128 & 0\\128 & 32 & 0 & 288 & 32 & 0 & 512 & 32 & 0\end{pmatrix},
    \]
    and the initial conditions can be calculated from \eqref{eq:Aro} for $L_k$ at the root $v$, yielding $\mathbf{v}_0 = (1,13,46)^\top$, $\mathbf{v}_1 = (1,17,124)^\top$, and $\mathbf{v}_2 = (1,23,321)^\top$. Next, using Remark \ref{r:degiii}  and straightforward calculations involving the right-hand side of \eqref{eq:3kk}, we obtain $a_{k,0}=1$, $a_{k,1}=2^{k+1}+2k+11$, and
\[
    a_{k,2} = 9 \cdot 2^{2k} + 11 \cdot 2^{k+1} + 3k 2^{k} + 2k^2 + 21k + 15.
\]
Consequently,
\[
    a_{k,0}a_{k,2} - a_{k,1}^2 = 5 \cdot 2^{2k} - (5k + 22) 2^{k} - 2k^2 - 23k - 106.
\]
For $k \geq 4$, the inequalities $3 \cdot 2^{2k} > (5k+22)2^k$ and $2 \cdot 2^{2k} > 2k^2 + 23k + 106$ hold (which can be proven easily by induction), implying $a_{k,0}a_{k,2} - a_{k,1}^2 > 0$. As $\alpha(L_k) = 2k+6$, the log-concavity is broken at index $2k+5$ for all $k \geq 4$.
\end{proof}

This result shows that all trees identified by Kadrawi et al.~\cite{Kadrawi} and by Kadrawi and Levit~\cite{Kadrawi2} that are constructed using the base graph $T_{1,3}^v$ and the pendant graph $P_2^w$ are not log-concave for sufficiently large $k$. In fact, the following corollary generalizes this observation. In its proof, we use the exact-order asymptotic notation $\Theta$ (see~\cite[p.~87]{Brassard} or~\cite[p.~110]{Knuth}).

\begin{corollary}\label{cor:Br1}
    Let $m$ be a nonnegative integer. Then the independence polynomial of the tree $(T_{1,3}^v:P_2^w)_{k,k+m}$ is not log-concave for all sufficiently large $k$.
\end{corollary}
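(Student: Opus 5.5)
We mirror the proof of Corollary~\ref{cor:Ka}, but track only the exponential order of the three leading coefficients of the reflected polynomial. Fix $m$ and set $L_k=(T_{1,3}^v:P_2^w)_{k,k+m}$. By Lemma~\ref{l:deg}\eqref{l:degi}, $L_k$ is monic of degree $2k+m+6$. Hence in $RL_k=\sum_j a_{k,j}x^j$ we have $a_{k,0}=1$, and log-concavity fails at index $2k+m+5$ exactly when $a_{k,2}>a_{k,1}^2$. The plan is to show $a_{k,1}=\Theta(2^k)$ and $a_{k,2}=c\,4^k+O(k2^k)$, and then compare the constant $c$ with the square of the leading constant of $a_{k,1}$.

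To obtain these asymptotics I would avoid the general recurrence of Theorem~\ref{thm:Lrec} and instead use an explicit closed form. Apply \eqref{eq:Aro} at the root $v$ and then at the two scale-graph centres. This writes $L_k$ as a $\Z[x]$-combination of products $Z_k(P_2)\,Z_{k+m}(P_2)$, $P_2^{k}Z_{k+m}(P_2)$, $Z_k(P_2)P_2^{k+m}$, $P_2^{2k+m}$, where $Z_j(P_2)=P_2^j+xP_1^j$, with coefficients built from $S_{2,3}$ and $P_1^3$. Since $\deg P_2=\deg P_1=1$ and $RP_1=x+1$, $RP_2=x+2$, Lemma~\ref{l:Rcomp} (or directly the additive and multiplicative rules for $R$) gives $RL_k$ as an explicit combination of $(x+2)^{\alpha k+\beta}(x+1)^{\gamma k+\delta}$ with fixed polynomial coefficients. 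This is consistent with the characteristic roots $1,2,4,-2$ of $\chi$ in Theorem~\ref{thm:Lrec} after reflection and evaluation at $x=0$. Expanding each such term to order $x^2$ is routine binomial bookkeeping. The result is $a_{k,1}=A2^k+O(k)$ and $a_{k,2}=C4^k+O(k2^k)$, with $A,C$ depending on $m$ in a computable way, roughly geometrically through $2^m$. By the paper's reflection argument, the monomials with the largest power of $x+2$ are exactly those where both scale-graph centres are excluded. These carry the factor $R(S_{2,3})=x^{4}S_{2,3}(1/x)$, whose low coefficients are $1,\dots$. Only these monomials feed the $4^k$ terms.

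The key step, which I expect to be the main obstacle, is proving $C>A^2$ for every fixed $m$. For $m=0$ this is the computed $9>2^2$, giving the leading term $5\cdot4^k$ of Corollary~\ref{cor:Ka}. In general I would factor out $(x+2)^{2k+m}$: the dominant part of $RL_k$ has the form $(x+2)^{2k+m}\,g(x)$ plus a contribution $\epsilon\,(x+2)^{k}(\dots)$ from the configurations in which one scale centre is chosen. Here $g$ comes from the structure of $T_{1,3}$ and does not depend on $k$. I would then compute $A$ and $C$ directly as $2^{2k+m}$-normalised coefficients: the $x^1$ and $x^2$ coefficients of $(1+x/2)^{2k+m}$ contribute $k$ and $k^2$ terms whose leading parts cancel in $a_{k,2}-a_{k,1}^2$ (the binomial sequence alone is log-concave). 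The genuine $4^k$ surplus therefore comes from the cross terms involving $Z$, which bring in $(x+1)^j$ factors. I would verify the sign of that surplus symbolically in $m$. If a uniform sign is hard to certify, the fallback is to note that $C-A^2$ is an explicit rational expression in $2^{-m}$ and check its positivity.

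With $C>A^2$ established, $a_{k,0}a_{k,2}-a_{k,1}^2=(C-A^2)\,4^k+O(k2^k)$, which is $\Theta(4^k)$ and positive for all sufficiently large $k$. Hence log-concavity breaks at index $2k+m+5$.
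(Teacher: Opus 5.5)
\textbf{There is a genuine gap.} The step you flag as the main obstacle is not merely hard: the inequality $C>A^2$ is false for every $m\ge 3$.

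Carry out your bookkeeping explicitly. Applying \eqref{eq:Aro} at $v$ gives $L_k=S_{2,3}\,Z_k(P_2)\,Z_{k+m}(P_2)+x\,P_2^{2k+m+3}$. Note that the $v$-included term is $xP_2^{2k+m+3}$, not a term built from $P_1^3$. The two terms have degrees $2k+m+6$ and $2k+m+4$, so
\[
RL_k=RS_{2,3}\,RZ_k\,RZ_{k+m}+x^2(x+2)^{2k+m+3},\qquad RZ_j=(1+x)^j+x(x+2)^j,
\]
with $RS_{2,3}=1+11x+15x^2+7x^3+x^4$. Hence
\[
a_{k,1}=11+2k+m+2^k+2^{k+m},\qquad a_{k,2}=9\cdot 2^{2k+m}+O(k2^{k+m}).
\]
The $9$ splits as $1$ from the product of the two $x(x+2)^j$ parts plus $8$ from the $v$-included term. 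Your account of where the $4^k$ terms come from leaves out that $v$-included term. The $(1+x)^j$ factors contribute only $O(k2^{k+m})$, not the surplus you attribute to them.

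Thus $A=1+2^m$, $C=9\cdot 2^m$, and
\[
C-A^2=-\bigl(4^m-7\cdot 2^m+1\bigr).
\]
This equals $5,9,11$ for $m=0,1,2$; the value $5$ agrees with Corollary~\ref{cor:Ka}. It is $-9$ for $m=3$ and negative for all $m\ge 3$. For example, with $m=3$ and $k=10$ one gets $a_{k,1}=9250$ and $a_{k,2}=75752969<9250^2$. So your argument proves the corollary only for $m\in\{0,1,2\}$. For $m\ge 3$, log-concavity is \emph{not} broken at index $2k+m+5$ for large $k$. As a check, the same computation with base graph $T_K^v$ gives $-(4^m-11\cdot 2^m+1)$, which is positive exactly for $m\le 3$. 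This matches the last rows of Table~\ref{tab:patt}.

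Following the paper's proof will not repair this, because its induction step drops leading-order terms. The last scale of $(T_{1,3}^v:P_2^w)_{k,k+m}$ has base graph $(T_{1,3}^v:P_2^w)_k$, so \eqref{eq:nRec} gives the second term $x\,P_2^{k+m-1}(T_{1,3}^v:P_2^w)_k$. Its degree is $2k+m+5$, only one less than the first term. After reflection it becomes $x(x+2)^{k+m-1}R(T_{1,3}^v:P_2^w)_k$, not $x^{k+m+2}(x+2)^{k-1}RT_{1,3}$, and it does not vanish modulo $x^3$. It contributes $2^{k+m-1}$ to $b_{k,m}$: for instance $b_{1,0}=17$ but $b_{1,1}=20$, not $18$. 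It contributes about $9\cdot 2^{2k+m-1}$ to $a_{k,m}$. Restoring these terms reproduces exactly the constant $-(4^m-7\cdot 2^m+1)$ above.

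The same expansion gives $a_{k,j}\sim 9\binom{2k+m}{j-2}2^{2k+m-j+2}$ for each fixed $j\ge 2$. Hence every other fixed distance from the top is also log-concave for large $k$. For $m\ge 3$, any proof of non-log-concavity would have to locate a violation in the bulk of the sequence, if one exists at all. With your method, and with the paper's, the statement can only be established for $m\in\{0,1,2\}$.
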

\begin{proof}
    By Lemma~\ref{l:deg}\eqref{l:degi}, we can write $R(T_{1,3}^v:P_2^w)_{k,k+m} = 1 + b_{k,m}x + a_{k,m}x^2 + \cdots$, where $b_{k,m}, a_{k,m} \in \Z$. Therefore, it suffices to prove that $a_{k,m} - b_{k,m}^2 \in \Theta(2^{2k})$. We establish this, together with $b_{k,m} \in \Theta(2^{k})$, by induction on $m$. The base case $m=0$ follows from the proof of Corollary~\ref{cor:Ka}. Now assume $m>0$. From formula~\eqref{eq:nRec}, Lemma~\ref{l:deg}, and the additive and multiplicative properties of the reflection operator $R$, we obtain
\[
    R(T_{1,3}^v:P_2^w)_{k,k+m} = P_1 \, R(T_{1,3}^v:P_2^w)_{k,k+m-1} + x^{k+m+2}(x+2)^{k-1} R T_{1,3}.
\]
Thus, for all $k\geq 0$, the term involving $x^{k+m+2}$ vanishes modulo $x^3$, yielding
\[
    a_{k,m}x^2 + b_{k,m}x + 1 \equiv (a_{k,m-1} + b_{k,m-1})x^2 + (b_{k,m-1} + 1)x + 1 \pmod{x^3}.
\]
Equating coefficients and applying the induction hypothesis gives
\[
    a_{k,m} = a_{k,m-1} + b_{k,m-1} \quad \text{and} \quad b_{k,m} = b_{k,m-1} + 1 \in \Theta(2^k).
\]
Consequently,
\[
    a_{k,m} - b_{k,m}^2 = (a_{k,m-1} - b_{k,m-1}^2) - (b_{k,m-1}+1) \in \Theta(2^{2k}).
\]

\end{proof}

The inductive technique used in the proof of Corollary~\ref{cor:Br1} applies similarly to pattern graphs with base graph $T_K^v$ and pendant graph $P_2^w$.

\begin{corollary}\label{cor:Br1a}
    Let $m$ be a nonnegative integer. Then the independence polynomial of the tree $(T_K^v:P_2^w)_{k,k+m}$ is not log-concave for all sufficiently large $k$.
\end{corollary}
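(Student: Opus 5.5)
The plan follows the proof of Corollary~\ref{cor:Br1} exactly, with $T_K^v$ in place of $T_{1,3}^v$. Write $L_k=(T_K^v:P_2^w)_{k,k+m}$.

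\emph{Step 1: degrees.} First I would prove an analogue of Lemma~\ref{l:deg}\eqref{l:degi}: $I(L_k;x)$ is monic of degree $2k+m+d$ for a fixed constant $d$. The child $c$ of $v$ in $T_K$ carries a pendant $P_4$ and two pendant $P_2$'s, and $v_0$ carries $k$ (respectively $k+m$) pendant $P_2$'s. For the three copies of $P_2$ hanging from $c$ or from a $v_0$, I would show that a maximum independent set takes $c$, the two $v_0$'s, and all leaves at distance two from them. The remaining pendant $P_4$ then contributes a path $P_3$ below $c$, or possibly a longer path once the leaf structure of Figure~\ref{fb:3kk+1} is included. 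If its maximum independent set is not unique, the polynomial is not monic. In that case I would instead track the leading coefficient $c_0$ and prove the non-log-concavity inequality in the form $c_0a-b^2>0$.

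\emph{Step 2: base case $m=0$.} I would argue as in Corollary~\ref{cor:Ka}. Check that $\alpha(T_K\setminus v)>\alpha(T_K\setminus N[v])$. Then Example~\ref{e:2} shows that $(RL_k)_k$ is linearly recurrent with characteristic polynomial $\prod_{j=0}^2(r-RP_1^{2-j}RP_2^{j})$. Its reflected roots are $1$, $1+2x$ and $(1+2x)^2$, whose constant terms are $1,2,4$ after the same normalization. Apply Proposition~\ref{p:PattG} with $u=2$, compute the initial vectors $\mathbf v_0,\mathbf v_1,\mathbf v_2$ from \eqref{eq:Aro} at $v$, and use Remark~\ref{r:degiii}. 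This gives closed forms
\[
RL_k=c_0+b_kx+a_kx^2+\cdots,
\]
in which $b_k$ is $\Theta(2^k)$ plus polynomial terms and $a_k$ is $\Theta(4^k)$ plus lower-order terms. The quantity to control is $c_0a_k-b_k^2$. What matters is that its coefficient of $4^k$ is positive, as the value $5$ was for $T_{1,3}$. Once that holds, $c_0a_k-b_k^2\in\Theta(2^{2k})$.

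\emph{Step 3: induction on $m$.} I would copy the computation in Corollary~\ref{cor:Br1}. Apply \eqref{eq:nRec} to the second scale, then the additive and multiplicative properties of $R$ together with Step 1. This gives
\[
RL_{k}^{(m)}=P_1\,RL_k^{(m-1)}+x^{k+m+e}(x+2)^{k-1}RT_K
\]
for some fixed $e\ge 1$. Reducing modulo $x^3$ yields $b_{k,m}=b_{k,m-1}+c_0$ and $a_{k,m}=a_{k,m-1}+b_{k,m-1}$. Substituting,
\[
c_0a_{k,m}-b_{k,m}^2=(c_0a_{k,m-1}-b_{k,m-1}^2)-c_0b_{k,m-1}-c_0^2.
\]
The subtracted terms are $O(2^k)$ by induction, so the left side stays in $\Theta(2^{2k})$. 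Hence it is positive for large $k$, and log-concavity fails at index $\alpha(L_k)-1$.

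\emph{Main obstacle.} The hard step is the explicit base-case computation: getting the initial vectors for $T_K$ right and confirming that the $4^k$ coefficient of $c_0a_k-b_k^2$ is strictly positive. I would derive it symbolically from the eigenvector decomposition, writing $a_k=\alpha 4^k+O(k2^k)$ and $b_k=\beta 2^k+O(k)$, and then check $c_0\alpha>\beta^2$ numerically. Small cases such as $k=4,5$ of the known non-log-concave $3^*,k,k$ trees would serve as a sanity check.
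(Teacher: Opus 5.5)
Your plan is the one the paper indicates; its only proof is the remark that the induction of Corollary~\ref{cor:Br1} carries over. Steps~1--2 are sound. The maximum independent set of $L_k$ is unique, so $I(L_k;x)$ is monic of degree $2k+m+7$ and $c_0=1$. The base-case sign you left open is positive. One slip there: the reflected characteristic roots are $(1+x)^2$, $(1+x)(2+x)$, $(2+x)^2$, not $1$, $1+2x$, $(1+2x)^2$.

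\textbf{The gap is Step~3.} It copies a recursion from the proof of Corollary~\ref{cor:Br1}, and that recursion is wrong. Apply \eqref{eq:nRec} at a root $w$ in the second scale. Deleting $N[w]$ leaves $(T_K^v:P_2^w)_k$ together with $k+m-1$ copies of $P_2$, not $T_K$ with $k-1$ copies:
\[
(T_K^v:P_2^w)_{k,k+m}=P_1\,(T_K^v:P_2^w)_{k,k+m-1}+x\,P_2^{\,k+m-1}\,(T_K^v:P_2^w)_k .
\]
The two summands have degrees $2k+m+7$ and $2k+m+6$. After reflection the correction term is therefore $x\,(x+2)^{k+m-1}R(T_K^v:P_2^w)_k$. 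Since $(T_K^v:P_2^w)_k$ is monic, this term has $x$-coefficient $2^{k+m-1}$, so it does not vanish modulo $x^3$. Hence $b_{k,m}=b_{k,m-1}+1+2^{k+m-1}$, and $a_{k,m}-b_{k,m}^2$ changes by a quantity of order $4^k$, not $O(2^k)$. Concretely, $(T_{1,3}^v:P_2^w)_{1,2}$ has $20$ independent sets of size $\alpha-1=8$, not $b_{1,0}+1=18$.

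\textbf{No induction can rescue the argument for all $m$.} Apply \eqref{eq:Aro} once at $v$, with $A=P_4P_2^2+xP_3P_1^2$, $B=P_4P_2^2$ and $Z_j=Z_j(P_2^w)$. This gives
\[
R(T_K^v:P_2^w)_{k,k+m}=RA\cdot RZ_k\cdot RZ_{k+m}+x^2\,RB\,(x+2)^{2k+m},\qquad RZ_j=(1+x)^j+x(x+2)^j,
\]
where $RA=1+17x+36x^2+\cdots$ and $RB(0)=12$. Set $s_j=j+2^j$ and $t_j=\binom{j}{2}+j2^{j-1}$. Then $b_{k,m}=17+s_k+s_{k+m}$ and
\[
a_{k,m}-b_{k,m}^2=12\cdot 2^{2k+m}-\bigl(s_k^2+s_ks_{k+m}+s_{k+m}^2\bigr)-17(s_k+s_{k+m})+t_k+t_{k+m}-253=(11\cdot 2^m-4^m-1)\,4^k+O(k2^k).
\]
For $m=0$ this is $9\cdot 4^k-(5k+34)2^k-2k^2-35k-253$, which is positive for $k\ge 4$.

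The leading coefficient $11\cdot 2^m-4^m-1$ equals $9,17,27,23$ for $m=0,1,2,3$. It is negative for every $m\ge 4$; for example it is $-81$ at $m=4$. So for $m\ge 4$ the coefficients at index $\alpha-1$ satisfy the log-concavity inequality for all large $k$. The method therefore proves the corollary only for $0\le m\le 3$, which are exactly the four $3^*$ families of Table~\ref{tab:patt}. The statement for arbitrary $m$ is not justified. The same correction for $T_{1,3}$ gives leading coefficient $7\cdot 2^m-4^m-1$, which is positive only for $m\le 2$.
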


Additional infinite families of trees whose independence polynomials break log-concavity at a single index can be constructed without relying on the pendant graph $P_2$. First, it is convenient to summarize the key algebraic properties of the independence polynomials for pattern graphs of the form $(T_{1,\ell}^v:S_{2,n}^w)_k^{(m)}$.

\begin{theorem}\label{thm:PattG}
    Let $k, \ell, n$ be fixed nonnegative integers. For each nonnegative integer $m$, let $U_m$ denote the tree $(T_{1,\ell}^v:S_{2,n}^w)_k^{(m)}$. Then:
    \begin{enumerate}[(i)]
        \item\label{thm:PattGi} $\alpha(U_m) = k(n+1)m + \ell + 1$.
        \item\label{thm:PattGii} $RU_m = RS_{2,\ell} \,RT_{k,n}^m + RP_2^\ell \,RS_{2,n}^{km}$.
        \item\label{thm:PattGiii} The sequence $(RU_m)_{m\geq 0}$ is linearly recurrent with characteristic polynomial 
        \[
            \chi_1(r) = \bigl(r - RS_{2,n}^k\bigr)\bigl(r - RT_{k,n}\bigr).
        \]
    \end{enumerate}
\end{theorem}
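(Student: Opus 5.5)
The plan is to specialize equation~\eqref{eq:PattU} from Example~\ref{e:1} to $G^v=T_{1,\ell}^v$ and $H^w=S_{2,n}^w$, and then apply the reflection operator $R$. First I will identify the pieces. By definition $T_{1,\ell}=(P_1^v:P_2^w)_\ell$, so deleting the root $v$ leaves $Z_\ell(P_2^w)$. This is a center joined to $\ell$ pendant $P_2$'s, that is, $S_{2,\ell}$. Deleting $N[v]$ leaves $\ell$ disjoint copies of $P_2$, giving $P_2^\ell$. Likewise $S_{2,n}\setminus w$ consists of $n$ copies of $P_2\setminus w=P_1$. Next I need to recognize $Z_k(S_{2,n}^w)$. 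Attaching $k$ copies of $S_{2,n}$ by their centers to a new vertex $v_0$ gives exactly $T_{k,n}$ read with root $v_0$. Indeed $S_{2,n}=(P_1:P_1)_1^{(n)}$ is a center with $n$ pendant $P_2$'s, which is the same as $(P_1^v:P_2^w)_n$. Iterating $k$ times at a common root, which the permutation-invariance remark in the proof of Theorem~\ref{thm:Lrec} allows, gives $(P_1^v:P_2^w)_n^{(k)}=T_{k,n}$. Equation~\eqref{eq:PattU} then reads
\[U_m = S_{2,\ell}\,T_{k,n}^{\,m} + x\,P_2^{\,\ell}\,S_{2,n}^{\,km}.\]

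For (i), I compare degrees. By Lemma~\ref{l:deg}\eqref{l:degii}, $\deg S_{2,n}=n+1>n=\alpha(S_{2,n}\setminus w)$, so Example~\ref{e:1} gives $\alpha(Z_k(S_{2,n}))=k(n+1)$, hence $\deg T_{k,n}=k(n+1)$. The first summand then has degree $\ell+1+k(n+1)m$. The second has degree $1+\ell+k(n+1)m$. These are equal, and both leading coefficients are positive, so no cancellation occurs and (i) follows. For (ii), I apply the additive and multiplicative properties of $R$ with equal degrees, so no power of $x$ appears from the degree gap. I also use $R(xq)=Rq$ when $\deg(xq)$ is taken as $1+\deg q$, which is immediate from the definition since $x^{d+1}(1/x)q(1/x)=x^dq(1/x)$. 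Combined with multiplicativity, this gives $RU_m=RS_{2,\ell}\,RT_{k,n}^m+RP_2^\ell\,RS_{2,n}^{km}$.

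For (iii), there are two routes. The first is to invoke Example~\ref{e:1}. Its hypotheses hold: $\alpha(H)>\alpha(H\setminus w)$ as shown above, and $\alpha(S_{2,\ell})=\ell+1>\ell=\alpha(P_2^\ell)$. Then Theorem~\ref{thm:recM} together with reflection-compatibility gives the characteristic polynomial $(r-RS_{2,n}^k)(r-RT_{k,n})$, using $R(H^k)=(RH)^k$ and $R(Z_k)=RT_{k,n}$. The second, more direct route is to read it off from (ii): $RU_m$ is an $\R[x]$-combination of the geometric sequences $(RT_{k,n})^m$ and $(RS_{2,n}^k)^m$, so it lies in $\mathcal{S}(\chi_1)$, as in Lemma~\ref{l:recm}. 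I would present the direct route, since it avoids checking the degree hypotheses of Lemma~\ref{l:Rcomp} a second time.

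The main obstacle is the identification $Z_k(S_{2,n}^w)\cong T_{k,n}$ together with the matching of roots. This requires unwinding Definition~\ref{def:patt} carefully, because $T_{k,n}$ is built by iterating with base $P_1$, and one must check that its root is the vertex of degree $k$. The degree bookkeeping needed for the reflection step is routine once this identification is in place.
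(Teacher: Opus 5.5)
Your proof is correct. For (i) and (ii) it is the paper's argument: expand at the root $v$ by \eqref{eq:Aro} to get $U_m=S_{2,\ell}\,T_{k,n}^m+x\,P_2^\ell\,S_{2,n}^{km}$, check that both summands have degree $k(n+1)m+\ell+1$, and reflect using $Rx=1$. You spell out the identifications $T_{1,\ell}\setminus v\cong S_{2,\ell}$, $T_{1,\ell}\setminus N[v]\cong P_2^{\,\ell}$ and $Z_k(S_{2,n}^w)\cong T_{k,n}$, which the paper leaves implicit. You also derive $\alpha(T_{k,n})=k(n+1)$ from $Z_k(H^w)=H^k+x(H\setminus w)^k$ instead of citing Galvin.

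The real difference is in (iii). The paper invokes Example~\ref{e:1}, that is, Theorem~\ref{thm:recM} for the unreflected sequence followed by Lemma~\ref{l:Rcomp} to carry the recurrence through $R$. You read it directly off (ii). There $RU_m$ is an $\R[x]$-combination of the geometric sequences $\bigl((RT_{k,n})^m\bigr)_m$ and $\bigl(((RS_{2,n})^k)^m\bigr)_m$, each annihilated by $\chi_1$, so $(RU_m)_m\in\mathcal{S}(\chi_1)$ by Lemma~\ref{l:mod}. This is shorter and needs neither degree bookkeeping nor distinct roots. The paper's route instead reuses its general reflection-compatibility mechanism, which it applies uniformly to other families.

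There are two harmless slips. First, $S_{2,n}\setminus w\cong P_2^{\,n}$, not $n$ copies of $P_1$; only $\alpha=n$ is used, so nothing breaks. Second, $(P_1^v:P_2^w)_n=T_{1,n}$ is $S_{2,n}$ with an extra root joined to its centre, i.e.\ $Z_1(S_{2,n}^w)$, not $S_{2,n}$ itself. Your conclusion $T_{k,n}\cong Z_k(S_{2,n}^w)$, with root of degree $k$, is still right.

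Finally, like the paper's proof, yours tacitly needs $k\ge 1$, since Example~\ref{e:1} is stated for positive $k$. For $k=0$ one has $T_{0,n}=P_1$ and $\alpha(U_m)=\ell+1+m$, so the statement as written fails for $k=0$, $m\ge 1$.
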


\begin{proof}
    Applying formula~\eqref{eq:Aro} to the root $v$ of $U_m$ gives
    \begin{equation}\label{eq:PattG1}
        U_m = S_{2,\ell} \, T_{k,n}^m + x \, P_2^\ell \, S_{2,n}^{km}.
    \end{equation}
    From~\cite[p.~3]{Galvin} we have $\alpha(T_{k,n}) = k(n+1)$. Together with Lemma~\ref{l:deg}\eqref{l:degii}, this shows that both terms on the right-hand side of~\eqref{eq:PattG1} have the same degree, namely $k(n+1)m + \ell + 1$. This proves part~\eqref{thm:PattGi}. Because the degrees are equal, applying the reflection operator $R$ to both sides of~\eqref{eq:PattG1} is straightforward. Using $Rx = 1$ together with the multiplicativity and additivity of $R$, we obtain part~\eqref{thm:PattGii}. Finally, since $T_{1,\ell}^v$ as a base graph and $S_{2,n}$ as a pendant graph satisfy the conditions of Example~\ref{e:1}, the sequence $(U_m)_{m\geq 0}$ is reflection-compatible; that is, part~\eqref{thm:PattGiii} holds.
\end{proof}

\begin{corollary}\label{cor:cNonI}
    For $m \geq 9$, the independence polynomial of the tree $(T_{1,2}^v:S_{2,4}^w)_2^{(m)}$ has degree $10m+3$, and its log-concavity is broken at index $10m+2$.
\end{corollary}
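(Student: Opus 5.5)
We apply Theorem~\ref{thm:PattG} with $k=2$, $\ell=2$, $n=4$, so $U_m=(T_{1,2}^v:S_{2,4}^w)_2^{(m)}$. Part~\eqref{thm:PattGi} gives $\alpha(U_m)=2\cdot 5\cdot m+2+1=10m+3$, which is the degree claim. For the log-concavity claim we reflect. Write $RU_m=a_0+a_1x+a_2x^2+\cdots$, so that $a_j$ is the coefficient of $x^{10m+3-j}$ in $U_m$. Then log-concavity fails at index $10m+2$ exactly when $a_0a_2-a_1^2>0$. So the whole proof reduces to computing $RU_m \bmod x^3$ as an explicit function of $m$, using the closed form in part~\eqref{thm:PattGii}:
\[
RU_m=RS_{2,2}\,RT_{2,4}^{\,m}+RP_2^{\,2}\,RS_{2,4}^{\,2m}.
\]
Rather than diagonalize the recurrence of part~\eqref{thm:PattGiii} via Proposition~\ref{p:PattG}, we work directly with this formula.

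First we compute the reflected building blocks modulo $x^3$. Since $S_{2,t}$ is the spider with $t$ legs of length two, applying \eqref{eq:Aro} at the center gives $S_{2,t}=(1+2x)^t+x(1+x)^t$. Its degree is $t+1$, so $RS_{2,t}=x(x+2)^t+(x+1)^t$. This yields
\[
RS_{2,2}\equiv 1+6x+5x^2,\qquad RS_{2,4}\equiv 1+20x+38x^2,\qquad RP_2^{\,2}=4+4x+x^2 .
\]
For $T_{2,4}=(P_1^v:P_2^w)_4^{(2)}$, applying \eqref{eq:Aro} at $v$ gives $T_{2,4}=Z_4(P_2^w)^2+x(1+2x)^8$, where $Z_4(P_2^w)=(1+2x)^4+x(1+x)^4$ has degree $5$. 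Its reflection coincides with $RS_{2,4}$. Since $\deg x(1+2x)^8=9<10$, the additive rule gives
\[
RT_{2,4}=RS_{2,4}^{\,2}+x(x+2)^8\equiv 1+296x+1500x^2 \pmod{x^3}.
\]

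Next we expand the powers with $(1+cx+dx^2)^N\equiv 1+Ncx+\bigl(Nd+\binom N2c^2\bigr)x^2$ and multiply out. This should give
\[
a_0=5,\qquad a_1=456m+10,\qquad a_2=47008m^2-41668m+6,
\]
and hence
\[
a_0a_2-a_1^2=27104m^2-217460m-70 .
\]
The key point is that a single power $p^N$ always has a negative "log-concavity defect" of order $N^2$. Positivity can only come from the cross interaction of the two summands, whose linear coefficients ($296$ per step versus $40$ per step) differ. This is why the leading coefficient $5\cdot 47008-456^2$ is positive.

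Finally, the quadratic $27104m^2-217460m-70$ is negative at $m=8$ and positive at $m=9$, and it is increasing for $m\ge 5$. Therefore $a_0a_2>a_1^2$ for all $m\ge 9$. This proves the breaking at index $10m+2$, and it also explains the threshold in the statement. The main obstacle is purely the bookkeeping in the second step, namely getting the quadratic coefficients of $a_2$ exactly right, since the final inequality is a delicate cancellation between numbers of size about $2\cdot 10^5$. We would double-check these coefficients against a direct computation of $U_m$ for small $m$ (for instance $m=0,1,2$) via \eqref{eq:Aro}.
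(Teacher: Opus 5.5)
Your proof is correct, and it reaches the coefficients by a different route from the paper. The arithmetic you only sketched does check out: from $RS_{2,2}\equiv 1+6x+5x^2$, $RS_{2,4}\equiv 1+20x+38x^2$ and $RT_{2,4}\equiv 1+296x+1500x^2 \pmod{x^3}$ one gets exactly $a_{m,0}=5$, $a_{m,1}=456m+10$ and $a_{m,2}=47008m^2-41668m+6$. Your quadratic $27104m^2-217460m-70$ is the paper's $238214+270412(m-9)+27104(m-9)^2$ expanded.

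The paper works from the recurrence in Theorem~\ref{thm:PattG}\eqref{thm:PattGiii} via Proposition~\ref{p:PattG}. It puts $\Psi_{2,2}$ of the companion matrix into Jordan form $\diag(J_4(1),J_2(1))$ with an explicit $6\times 6$ Jordan basis $\mathbf{Q}$. It then uses part~\eqref{thm:PattGii} only for the initial vectors $\mathbf{v}_0=(5,10,6)^\top$ and $\mathbf{v}_1=(5,466,5346)^\top$, and reads off the $a_{m,j}$ via Remark~\ref{r:degiii}.

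You skip part~\eqref{thm:PattGiii} and Proposition~\ref{p:PattG} entirely. Instead you expand the explicit two-term solution in part~\eqref{thm:PattGii} with the truncated binomial formula. This works cleanly because both characteristic roots, $RS_{2,4}^2$ and $RT_{2,4}$, are $\equiv 1 \pmod x$, so their powers are polynomial in $m$ modulo $x^3$.

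What your version buys:
\begin{itemize}
\item It is shorter and hand-checkable, with no opaque matrix $\mathbf{Q}$.
\item It shows the threshold is sharp: the quantity equals $-5094$ at $m=8$.
\item It makes visible why positivity occurs at all: the two summands advance at different per-step rates, $296$ versus $40$.
\end{itemize}

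What the paper's version buys:
\begin{itemize}
\item It needs only a recurrence and initial values, not a closed form.
\item It runs uniformly across all families covered by Theorems~\ref{thm:recM} and~\ref{thm:Lrec}, including those whose characteristic roots are distinct modulo $x$. Corollary~\ref{cor:Ka} is such a case, and there the coefficients grow exponentially rather than polynomially.
\item It is mechanical to automate for larger $u$, as in Corollaries~\ref{cor:infiniteTwo} and~\ref{cor:infiniteThree}.
\end{itemize}
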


\begin{proof}
    Let $U_m = (T_{1,2}^v:S_{2,4}^w)_2^{(m)}$. From Theorem~\ref{thm:PattG}, we obtain that $(RU_m)_{m\geq 0}$ is linearly recurrent with characteristic polynomial $(r-RS_{2,4}^2)(r-RT_{2,4})$. Applying Proposition~\ref{p:PattG} with $u=2$ yields
    \begin{equation}\label{eq:S24}
        \begin{pmatrix}
            \mathbf{v}_{m-1} \\
            \mathbf{v}_m
        \end{pmatrix}
        = \mathbf{Q} \operatorname{diag}\bigl( J_{4}(1)^{m-1},\, J_{2}(1)^{m-1} \bigr) \mathbf{Q}^{-1}
        \begin{pmatrix}
            \mathbf{v}_{0} \\
            \mathbf{v}_{1}
        \end{pmatrix},
    \end{equation}
    where $RU_m = \sum_{j=0}^\infty a_{m,j}x^j$ and
    \[
        \mathbf{v}_m = \begin{pmatrix}
            a_{m,0} \\ a_{m,1} \\ a_{m,2}
        \end{pmatrix}, \qquad
        \mathbf{Q} =\begin{pmatrix}0 & 0 & -21 & 21 & 0 & 21\\0 & -7056 & 0 & 0 & 740 & -740\\-2122176 & -290136 & 0 & 0 & 0 & 0\\0 & 0 & -21 & 0 & 0 & 21\\0 & -7056 & -7056 & 0 & 740 & 0\\-2122176 & -2412312 & -290136 & 0 & 0 & 0\end{pmatrix}.
    \]
    The initial conditions are computed from Theorem~\ref{thm:PattG}\eqref{thm:PattGii}: $\mathbf{v}_0 = (5,10,6)^\top$ and $\mathbf{v}_1 = (5,466,5346)^\top$.
    Straightforward evaluation of the right-hand side of \eqref{eq:S24} gives
    \[
        a_{m,0}=5,\quad
        a_{m,1}=456 m + 10,\quad
        a_{m,2}=47008 m^{2} - 41668 m + 6.
    \]
    Hence,
    \[
        a_{m,0}a_{m,2} - a_{m,1}^{2}
        = 238214 + 270412 (m-9) + 27104 (m-9)^{2}.
    \]
    By Theorem~\ref{thm:PattG}\eqref{thm:PattGi}, $\alpha(U_m) = 10m+3$. Since $a_{m,0}a_{m,2} > a_{m,1}^{2}$ for all $m \geq 9$, the log-concavity of $U_m$ is broken at index $10m+2$ whenever $m \geq 9$.
\end{proof}

Note that for the trees in Corollary~\ref{cor:cNonI}, the set of non-isolated limit points of the zeros of their independence polynomials is given by the equimodular equation $|S_{2,4}^2| = |T_{2,4}|$ (by Theorem~\ref{thm:recM} and the Beraha--Kahane--Weiss theorem). The locus defined by this condition does not coincide with the circle $|x+1/3|=1/3$; for example, the point $x = -2/3$ lies on the circle but does not satisfy the equimodular equation.

\subsection{Trees breaking log-concavity at two indices}\label{s:infiniteTwo}

By increasing the parameters in the pattern graphs from Corollary~\ref{cor:cNonI}, we can construct trees whose independence polynomials break log-concavity at several consecutive coefficients.

\begin{corollary}\label{cor:infiniteTwo}
    For $m \geq 16$, the independence polynomial of the tree $(T_{1,3}^v:S_{2,4}^w)_2^{(m)}$ has degree $10m+4$, and its log-concavity is broken at indices $10m+3$ and $10m+2$.
\end{corollary}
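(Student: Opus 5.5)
Let $U_m=(T_{1,3}^v:S_{2,4}^w)_2^{(m)}$. I follow the proof of Corollary~\ref{cor:cNonI}, now with $\ell=3$, $n=4$, $k=2$. By Theorem~\ref{thm:PattG}\eqref{thm:PattGi}, $\alpha(U_m)=2\cdot5\cdot m+3+1=10m+4$. By part~\eqref{thm:PattGii}, $RU_m=RS_{2,3}\,RT_{2,4}^m+RP_2^3\,RS_{2,4}^{2m}$, and by part~\eqref{thm:PattGiii} the sequence $(RU_m)_{m\ge0}$ is linearly recurrent with characteristic polynomial $(r-RS_{2,4}^2)(r-RT_{2,4})$.

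Breaking log-concavity at indices $10m+3$ and $10m+2$ means, in terms of the reflected coefficients $RU_m=\sum_j a_{m,j}x^j$, that $a_{m,1}^2<a_{m,0}a_{m,2}$ and $a_{m,2}^2<a_{m,1}a_{m,3}$. So I need the coefficients up to $x^3$, and I will apply Proposition~\ref{p:PattG} with $u=3$ instead of $u=2$.

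The constant terms of $RS_{2,4}^2$ and $RT_{2,4}$ are both $1$, since these polynomials are monic before reflection. Hence $\Psi_{2,3}$ of the companion matrix is unipotent, and its Jordan form consists of blocks with eigenvalue $1$. By Remark~\ref{r:degiii}, each $a_{m,j}$ is then a polynomial in $m$ of degree at most about $j+1$. To avoid computing a $\mathbf{Q}$ matrix explicitly, I will use the closed formula from part~\eqref{thm:PattGii} directly:
\[
RT_{2,4}^m=(1+tx+\cdots)^m,\qquad RS_{2,4}^{2m}=(1+sx+\cdots)^{2m}.
\]
Expanding both by the binomial theorem modulo $x^4$ gives each $a_{m,j}$ as an explicit polynomial in $m$. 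The inputs are the first four reflected coefficients of $S_{2,3}$, $P_2^3$, $S_{2,4}$ and $T_{2,4}$, all of which follow from \eqref{eq:Aro} and \eqref{eq:nRec}.

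For the first inequality, $a_{m,0}$ is a constant ($=a_{0,0}$), $a_{m,1}$ is linear in $m$, and $a_{m,2}$ is quadratic in $m$. The difference $a_{m,0}a_{m,2}-a_{m,1}^2$ is therefore a quadratic in $m$, and I check that its leading coefficient is positive, as it was in Corollary~\ref{cor:cNonI}. For the second, $a_{m,1}a_{m,3}-a_{m,2}^2$ is a polynomial of degree four in $m$. Its $m^4$ coefficient can come out with either sign: it involves the leading terms $\tfrac{c_1 m}{1}\cdot\tfrac{c_3m^3}{6}$ versus $\bigl(\tfrac{c_2m^2}{2}\bigr)^2$, and the dominant part comes from the unipotent structure, where $\log$-type cancellation may occur. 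This is the main obstacle: the leading terms may cancel, leaving the sign to be decided by a lower-order coefficient. I will first compute the exact polynomial. Then, as in the earlier corollaries, I will rewrite it in powers of $(m-16)$ and verify that every coefficient is positive, which yields the inequality for all $m\ge16$. If the threshold turns out to come from the first inequality rather than the second, I will handle that one the same way, expanding in powers of $(m-16)$.
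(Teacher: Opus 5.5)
Your proposal is correct. It shares the paper's skeleton: the degree comes from Theorem~\ref{thm:PattG}\eqref{thm:PattGi}, the two breaks reduce to $a_{m,0}a_{m,2}>a_{m,1}^2$ and $a_{m,1}a_{m,3}>a_{m,2}^2$, the $a_{m,j}$ are explicit polynomials in $m$, and positivity is checked by expanding in shifted powers of $m$. The difference is in how you obtain the $a_{m,j}$.

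The paper feeds the recurrence of Theorem~\ref{thm:PattG}\eqref{thm:PattGiii} into Proposition~\ref{p:PattG} with $u=3$. That requires writing down a Jordan basis $\mathbf{Q}$ of an $8\times 8$ unipotent matrix (blocks $J_5(1)$, $J_3(1)$) and the initial vectors $\mathbf{v}_0,\mathbf{v}_1$. You instead expand the closed form of Theorem~\ref{thm:PattG}\eqref{thm:PattGii} binomially modulo $x^4$, and here that is the more economical route. The inputs modulo $x^4$ are $RS_{2,3}\equiv 1+11x+15x^2+7x^3$, $RP_2^3=8+12x+6x^2+x^3$, $RS_{2,4}^2\equiv 1+40x+476x^2+1576x^3$, and $RT_{2,4}=RS_{2,4}^2+x(2+x)^8\equiv 1+296x+1500x^2+3368x^3$. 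With these, your expansion reproduces exactly the paper's $a_{m,1}=616m+23$, $a_{m,2}=50208m^2-41164m+21$ and its cubic $a_{m,3}$. It also shows directly that $a_{m,j}$ has degree exactly $j$ in $m$, not ``about $j+1$'', with leading coefficient $(296^j+8\cdot 40^j)/j!$.

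What the paper's matrix method buys is uniformity. It applies unchanged to sequences such as those of Theorem~\ref{thm:Lrec} (e.g.\ Corollary~\ref{cor:Ka}), where no explicit two-term closed form like Theorem~\ref{thm:PattG}\eqref{thm:PattGii} is at hand.

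Two small corrections to your plan:
\begin{itemize}
\item The cancellation you anticipate in the $m^4$ coefficient of $a_{m,1}a_{m,3}-a_{m,2}^2$ does not happen; that coefficient equals $582941696/3>0$.
\item For the first inequality, a positive leading coefficient only handles large $m$. You need the full quadratic $72416m^2-398812m-340=4\bigl(18104(m-6)^2+117545(m-6)+53441\bigr)$, which is positive for all $m\ge 6$ and hence for $m\ge 16$.
\end{itemize}
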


\begin{proof}
    The proof follows the same lines as that of Corollary~\ref{cor:cNonI}. 
    Let $G_m = (T_{1,3}^v:S_{2,4}^w)_2^{(m)}$, and let $a_{m,j}$ be integers such that $RG_m = \sum_{j=0}^{\alpha(G_m)} a_{m,j} x^j$. 
    From Theorem~\ref{thm:PattG} with $\ell = 3$, $n = 4$, and $k = 2$, together with Proposition~\ref{p:PattG} with $u=3$, we obtain 
    \begin{equation}\label{eq:T13}
        \begin{pmatrix}
            \mathbf{v}_{m-1} \\
            \mathbf{v}_m
        \end{pmatrix}
        = \mathbf{Q} \diag\!\bigl( J_{5}(1)^{m-1},\, J_{3}(1)^{m-1} \bigr) \mathbf{Q}^{-1}
        \begin{pmatrix}
            \mathbf{v}_{0} \\
            \mathbf{v}_{1}
        \end{pmatrix},
    \end{equation}
    where $\mathbf{Q}$ is the matrix
    \[
\left(\begin{matrix}0 & 0 & 0 & -1 & 1 & 0 & 0 & 1\\0 & 0 & -336 & 0 & 0 & 0 & \frac{62160}{1579} & - \frac{62160}{1579}\\0 & -101056 & -13816 & 0 & 0 & \frac{2190400}{1579} & 0 & 0\\\\-29976576 & -5105216 & -205840 & 0 & 0 & \frac{541587776}{1579} & 0 & 0\\0 & 0 & 0 & -1 & 0 & 0 & 0 & 1\\0 & 0 & -336 & -336 & 0 & 0 & \frac{62160}{1579} & 0\\\\0 & -101056 & -114872 & -13816 & 0 & \frac{2190400}{1579} & \frac{2190400}{1579} & 0\\\\-29976576 & -35081792 & -5311056 & -205840 & 0 & \frac{541587776}{1579} & \frac{541587776}{1579} & 0\end{matrix}\right)
    \] 
  and $\mathbf{v}_0=(9,23,21,8)^\top$, $\mathbf{v}_1=(9,639,9065,42876)^\top$ are obtained from Theorem~\ref{thm:PattG}\eqref{thm:PattGii}. Then, for all $m \geq 0$,
    \begin{gather*}
        a_{m,0} = 9, \quad a_{m,1} = 616 m + 23, \quad a_{m,2} = 50208 m^2 - 41164 m + 21, \\
        a_{m,3} = \frac{13223168}{3} m^3 - 12135360 m^2 + \frac{23311516}{3} m + 8.
    \end{gather*}
    Straightforward calculations then yield
    \begin{gather*}
        a_{m,0}a_{m,2} - a_{m,1}^2 = 4\bigl(18104 (m-6)^2 + 117545 (m-6) + 53441\bigr), \\
        \begin{aligned}
            a_{m,1}a_{m,3} - a_{m,2}^2 &= \frac{1}{3}\bigl(582941696 (m-16)^4 + 27586828800 (m-16)^3 \\
            &\quad + 437202140464 (m-16)^2 + 2355242090028 (m-16) \\
            &\quad + 552109256637 \bigr).
        \end{aligned}
    \end{gather*}
    Since $\alpha(G_m) = 10m+4$ by Theorem~\ref{thm:PattG}\eqref{thm:PattGi}, the result follows.
\end{proof}

\subsection{Log-concavity broken at three or more indices}\label{s:Br3-5}
The previous pattern-graph construction can be extended to obtain trees  breaking  log-concavity at three, four, and five consecutive indices.

\begin{corollary}\label{cor:infiniteThree}
    For $m \geq 13$, the independence polynomial of the tree $(T_{1,7}^v:S_{2,5}^w)_2^{(m)}$ has degree $12m+8$, and its log-concavity is broken at indices $12m+7$, $12m+6$, and $12m+5$.
\end{corollary}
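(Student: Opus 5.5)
The plan is to follow the template of Corollaries~\ref{cor:cNonI} and~\ref{cor:infiniteTwo}. Let $G_m=(T_{1,7}^v:S_{2,5}^w)_2^{(m)}$. Theorem~\ref{thm:PattG}\eqref{thm:PattGi} with $k=2$, $n=5$, $\ell=7$ gives $\alpha(G_m)=2\cdot 6\,m+8=12m+8$, which is the degree claim.

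The breaks at indices $12m+7$, $12m+6$, $12m+5$ of $G_m$ correspond to indices $1,2,3$ of the reflected polynomial $RG_m=\sum_j a_{m,j}x^j$. So we must show
\[
a_{m,0}a_{m,2}-a_{m,1}^2>0,\qquad a_{m,1}a_{m,3}-a_{m,2}^2>0,\qquad a_{m,2}a_{m,4}-a_{m,3}^2>0
\]
for all $m\ge 13$. Hence we need the first five coefficients of $RG_m$, and we apply Proposition~\ref{p:PattG} with $u=4$.

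By Theorem~\ref{thm:PattG}\eqref{thm:PattGiii}, $(RG_m)_{m\ge0}$ is linearly recurrent with characteristic polynomial $(r-RS_{2,5}^2)(r-RT_{2,5})$. Both $RS_{2,5}$ and $RT_{2,5}$ have constant term $1$, since $S_{2,5}$ and $T_{2,5}$ have unique maximum independent sets (Lemma~\ref{l:deg} and Galvin's description). The image under $\Psi_{2,4}$ of the companion matrix is therefore unipotent, and its Jordan form is a sum of blocks $J_s(1)$, presumably $\diag(J_6(1),J_4(1))$ by analogy with the previous cases. Remark~\ref{r:degiii} then shows that each $a_{m,j}$ is a polynomial in $m$ of degree at most $j$. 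It is actually simpler to bypass $\mathbf{Q}$ altogether and compute directly:
\[
RG_m=RS_{2,7}\,RT_{2,5}^m+RP_2^7\,RS_{2,5}^{2m},
\]
from Theorem~\ref{thm:PattG}\eqref{thm:PattGii}. Writing $RT_{2,5}=c_0(1+\beta_1x+\beta_2x^2+\cdots)$ and $RS_{2,5}^2=1+\gamma_1x+\cdots$, the coefficients modulo $x^5$ of the $m$-th powers are explicit binomial expressions in $m$. This yields closed forms for $a_{m,0},\dots,a_{m,4}$ as polynomials in $m$ of degrees $0,\dots,4$, which can be cross-checked against $\mathbf{v}_0$ and $\mathbf{v}_1$ computed from small cases.

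Next we substitute these closed forms into the three determinants. They are polynomials in $m$ of degrees $2$, $4$ and $6$ respectively, each with leading term proportional to $m^{2j}$. Each leading coefficient is a variance-type quantity, namely a Tur\'an-type expression in the cumulant-like coefficients of $\log RT_{2,5}$ versus $\log RS_{2,5}^2$, and we expect it to be positive. Following Corollary~\ref{cor:infiniteTwo}, we would expand each polynomial in powers of $(m-13)$ and check that all coefficients are nonnegative with a positive constant term. This proves positivity for $m\ge 13$.

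The main obstacle is this final sign verification, particularly for the third inequality, a degree-six polynomial with large coefficients. Its positivity for $m\ge13$ rests on the threshold $13$ being exactly where the lower-order negative terms are dominated. If the shifted expansion has a negative coefficient, we would fall back on bounding each negative term by a positive multiple of the leading one for $m\ge 13$, and check any remaining finitely many values directly.
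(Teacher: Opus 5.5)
Your proposal is correct and follows the paper's proof: the degree comes from Theorem~\ref{thm:PattG}\eqref{thm:PattGi}, the claim reduces to $a_{m,j-1}a_{m,j+1}>a_{m,j}^2$ for $j=1,2,3$ on the reflected coefficients $a_{m,0},\dots,a_{m,4}$, and positivity comes from shifted polynomial expansions. The only difference is that you read off the polynomial-in-$m$ coefficients by binomially expanding $RS_{2,7}RT_{2,5}^m+RP_2^7RS_{2,5}^{2m}$ instead of using the Jordan form in Proposition~\ref{p:PattG}. That shortcut is harmless, arguably simpler, and reproduces the paper's values (e.g.\ $a_{m,1}=10570m+583$ and $a_{m,2}=953266m^2-566943m+1141$). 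Your fallback is never needed: the paper's expansions in $(m-8)$, $(m-3)$ and $(m-13)$ have all coefficients positive, and re-centering at $13$ keeps them positive.
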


\begin{proof}
    The argument proceeds as before. Let $G_m=(T_{1,7}^v:S_{2,5}^w)_2^{(m)}$, and let $a_{m,j}$ be the coefficients of its reflected independence polynomial: $RG_m=\sum_{j=0}^{\alpha(G_m)} a_{m,j} x^j$. Applying Theorem~\ref{thm:PattG} with $\ell = 7$, $n = 5$, and $k = 2$, together with Proposition~\ref{p:PattG} with $u=4$, yields
    \begin{gather*}
        a_{m,0} = 129, \quad a_{m,1} = 10570 m + 583, \quad a_{m,2} = 953266 m^2 - 566943 m + 1141, \\
        a_{m,3} = \frac{687810932}{3} m^3 - 583211348 m^2 + \frac{1070981623}{3} m + 1267, \\
        \begin{split}
            a_{m,4} &= \frac{182164910818}{3} m^4 - \frac{989856488450}{3} m^3 + \frac{3398341376827}{6} m^2\\
            &\quad - \frac{1782883459493}{6} m + 875.
        \end{split}
    \end{gather*}
    Consequently,
    \[
        a_{m,0}a_{m,2}-a_{m,1}^2=35895660+94482357(m-8)+11246414(m-8)^2,
    \]
    \begin{multline*}
        a_{m,1}a_{m,3}-a_{m,2}^2=17655405115342+\frac{146409650144791}{3}(m-3)\\
        +\frac{121056582494899}{3}(m-3)^2+\frac{39678207054968}{3}(m-3)^3\\
        +\frac{4544013350972}{3}(m-3)^4,
    \end{multline*}
    and
    \begin{multline*}
        a_{m,2}a_{m,4}-a_{m,3}^2=1370296158733103513864\\
        +\frac{6220680127005039907100}{3}(m-13)+\frac{12717506131617535724605}{18}(m-13)^2\\
        +\frac{214694392978422167551}{2}(m-13)^3+\frac{75665809567839162839}{9}(m-13)^4\\
        +\frac{1000049830957485238}{3}(m-13)^5+\frac{47870969448786140}{9}(m-13)^6.
    \end{multline*}
    Since all three expressions are positive for $m \geq 13$, and since $\alpha(G_m) = 12m+8$ by Theorem~\ref{thm:PattG}\eqref{thm:PattGi}, log-concavity is broken at the stated indices.
  \end{proof}
  
    Similarly, for $m \in \{164, \dots, 213\}$, the trees $(T_{1,4}^v:S_{2,10}^w)_3^{(m)}$ ($\alpha=33m+5$) break log-concavity at the four consecutive indices $33m+1, \dots, 33m-2$, which are not adjacent to the independence number. Likewise, for $m \in \{171, \dots, 214\}$, the trees $(T_{1,7}^v:S_{2,10}^w)_3^{(m)}$ ($\alpha=33m+8$) break log-concavity at the five consecutive indices $33m+3, \dots, 33m-1$.


\end{document}